\newtheorem{theorem}{Theorem}[section]
\newtheorem{proposition}[theorem]{Proposition}
\newtheorem{lemma}[theorem]{Lemma}
\theoremstyle{definition}
\newtheorem{definition}{Definition}[section]
\theoremstyle{definition}
\newtheorem{remark}[theorem]{Remark}
\newcommand{\R}{\ensuremath{\mathbb{R}}}
\newcommand{\Z}{\ensuremath{\mathbb{Z}}}
\newcommand{\Q}{\ensuremath{\mathbb{Q}}}
\newcommand\HFKh{{\rm {\widehat{HFK}}}}
\newcommand\HFKt{{\rm {\widetilde{HFK}}}}
\newcommand\HFKm{{\rm {HFK^{-}}}}
\newcommand\CFKh{{\rm {\widehat{CFK}}}}
\newcommand\CFKm{{\rm {CFK^{-}}}}
\newcommand\CFKt{\widetilde{\mathrm{CFK}}}
\newcommand\alphas{\mbox{\boldmath$\alpha$}}
\newcommand\betas{\mbox{\boldmath$\beta$}}
\newcommand\ws{\mathbf w}
\newcommand\zs{\mathbf z}
\newcommand\F{\mathbb F}
\newcommand\M{\mathcal M}
\newcommand\TT{\mathbb{T}}
\newcommand\SH{\mathcal{H}}
\def\x{\mathbf{x}}
\newcommand\bs{\backslash}
\def\y{\mathbf{y}}
\def\z{\mathbf{z}}
\def\w{\mathbf{w}}
\newcommand{\mr}{\mathrm}
\newcommand{\cM}{\mathcal{M}}
\newcommand{\as}{\mathbf{a}}
\newcommand{\CC}{\mathbb{C}}
\newcommand{\RR}{\mathbb{R}}
\newcommand{\del}{\partial}
\newcommand{\ZZ}{\mathbb{Z}}
\newcommand{\cW}{\mathcal{W}}
\newcommand{\cD}{\mathcal{D}}
\begin{document}

\title[{Braids and combinatorial knot Floer homology}]{Braids and combinatorial knot Floer homology}

\author[P. Lambert-Cole]{Peter Lambert-Cole}
\address{Department of Mathematics \\ Louisiana State University}
\email{plambe7@lsu.edu}
\urladdr{\href{https://www.math.lsu.edu/~plambe7/}{https://www.math.lsu.edu/\~{}plambe7}}

\author[M. Stone]{Michaela Stone}
\address{Department of Mathematics \\ Louisiana State University}
\email{mston16@lsu.edu}
\urladdr{\href{https://www.math.lsu.edu/~mston16/}{https://www.math.lsu.edu/\~{}mstone16}}

\author[D. S. Vela-Vick]{David Shea Vela-Vick}
\address{Department of Mathematics \\ Louisiana State University}
\email{shea@math.lsu.edu}
\urladdr{\href{https://www.math.lsu.edu/~shea}{https://www.math.lsu.edu/\~{}shea}}

\keywords{Heegaard Floer homology}
\subjclass[2010]{57M27; 57R58}


\begin{abstract}

We present a braid-theoretic approach to combinatorially computing knot Floer homology.  To a knot or link $K$, which is braided about the standard disk open book decomposition for $(S^3,\xi_{std})$, we associate a corresponding multi-pointed nice Heegaard diagram.  We then describe an explicit algorithm for computing the associated knot Floer homology groups.  We compute explicit bounds for the computational complexity of our algorithm and demonstrate that, in many cases, it is significantly faster than the previous approach using grid diagrams.

\end{abstract}

\maketitle

\section{Introduction} 
\label{sec:introduction}

Knot Floer homology is a powerful invariant of knots and links defined independently by Ozsv\'ath and Szab\'o in \cite{OS3} and by Rasmussen in \cite{Ra}.  It is part of the general Heegaard Floer package \cite{OS1}, and has proven itself to be a tremendously useful and versatile invariant.  Knot Floer homology contains a wealth of geometric information about knots in the 3--sphere and in arbitrary 3--manifolds.  It is capable of detecting a knot's genus \cite{OS9}, determining if that knot is fibered \cite{Gh,Ni}, and also contains a powerful concordance invariant which provides lower bounds for the slice genus \cite{OS8,Ra}.  The knot Floer homology chain complex for a knot $K$ also contains information about the Heegaard Floer homology of manifolds obtained via surgeries along $K$ \cite{OS7,OS6}.

In 2006, Manolescu, Ozsv\'ath and Sarkar presented a combinatorial method for computing knot Floer homology via grid diagrams \cite{MOS}.  Their construction is based on a result of Sarkar and Wang who described a method for combinatorially computing versions of Heegaard Floer homology in general via ``nice'' Heegaard diagrams \cite{SW}.  The straightforward, combinatorial nature of Manolescu, Ozsv\'ath and Sarkar's construction has lead to many interesting and significant applications, including the discovery by Oszv\'ath, Szab\'o and Thurston \cite{OSzT} of a transverse invariant taking values in knot Floer homology.  

For knots with many crossings, computing knot Floer homology via grid diagrams is impractical.  Indeed, the computation grows factorially as a function of the grid-number (arc-index) and there exist torus knots for which the time complexity is $O(((c+2)!)^3)$, where $c$ is the crossing number.  The present goal is to present an alternative method for combinatorially computing knot Floer homology which is often faster than via grid diagrams.  Our approach goes by way of braid representations of topological knots, and is based on earlier work of Baldwin, V\'ertesi and the third author equating the various transverse invariants defined in the context of knot Floer homology \cite{BVV}.  Our main theorem is the following:

\begin{theorem}\label{thm:alg_time}
	Given a braid word $w$ representing a braid $\delta \in B_n$ in $n$--strands, the knot Floer homology $\HFKm(K(\delta))$ of the associated braid closure can be computed in $O((\frac{4^{l(w)}}{2n} + 4)^{6n} + 4^{l(w)} + n)$ time, where $l(w)$ is the length of the braid word $w$.  Moreover, there exists some positive real constant $c$, not depending on $n$ or $l(w)$, such that $\HFKm(K(\delta))$ can be computed in $O(c^{l(w)n} n^{-6n})$
\end{theorem}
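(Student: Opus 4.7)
The plan is to first build the multi-pointed nice Heegaard diagram $\HD(w)$ associated to the braid word $w$, then bound separately the number of generators of the chain complex $\CFKm(\HD(w))$ and the cost of computing its differential, and finally combine these with the cost of preprocessing the braid word itself.

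First, I would establish the combinatorial structure of $\HD(w)$. The construction should produce a Heegaard surface $\Sigma$ whose $\alphas$ and $\betas$ curves are organized into roughly $3n$ pairs, with each pair associated to one of the $n$ strands of the braid together with auxiliary arcs introduced at crossings; this accounts for the exponent $6n$ once both families are counted. I would then show by induction on $l(w)$ that every single curve on $\Sigma$ meets at most $\frac{4^{l(w)}}{2n} + 4$ intersection points of the opposite type. The inductive step analyzes what happens when a generator $\sigma_k^{\pm 1}$ is appended to $w$: this modification changes the diagram locally in a way that at worst quadruples the intersection counts on the two curves meeting at the new crossing, while leaving the remaining $n-2$ strands essentially undisturbed. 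Amortizing this local growth over all $n$ strands produces the denominator $2n$ in the bound.

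Next, I would count generators of $\CFKm(\HD(w))$ and bound the differential. A generator is a tuple of intersection points in $\Sym^k(\Sigma)$, one on each alpha curve, so the previous estimate gives at most $\left(\frac{4^{l(w)}}{2n}+4\right)^{6n}$ generators. Because $\HD(w)$ is constructed to be nice in the sense of Sarkar-Wang, the holomorphic disks counted by the differential are precisely the empty embedded bigons and rectangles, each determined by a constant number of points on $\Sigma$. For each fixed generator, enumerating these regions and checking emptiness takes time polynomial in $n$ and $l(w)$, which is negligible compared with the generator count. Combining these bounds with the $O(4^{l(w)} + n)$ cost of assembling $\HD(w)$ from $w$ gives the first estimate. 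For the second, I would rewrite
\[ \left(\tfrac{4^{l(w)}}{2n} + 4\right)^{6n} \;\leq\; \left(\tfrac{4^{l(w)+1}}{2n}\right)^{6n} \;=\; \frac{4^{6n\,l(w)+6n}}{(2n)^{6n}}, \]
absorbing the constants $4^{6n}$ and $2^{-6n}$ into a single base $c = 4^{6}$ to obtain $O(c^{l(w)n}\, n^{-6n})$.

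The main obstacle I anticipate is the intersection-counting step. One must track carefully how $\HD(w)$ evolves as each $\sigma_k^{\pm 1}$ is appended, and verify that the factor-of-four local blow-up per crossing is indeed sharp enough to yield the amortized per-curve bound $\frac{4^{l(w)}}{2n}$ rather than the naive $4^{l(w)}$. This requires a delicate analysis of the crossing moves and their cumulative effect on all curves in the diagram, not only the two directly involved at each crossing. Once that bound is in hand, the remaining ingredients (nice-diagram enumeration of disks and the algebraic manipulation above) are standard.
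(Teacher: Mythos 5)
Your high-level architecture (build the diagram from $w$ in $O(4^{l(w)}+n)$ time, bound the generators, then compute the differential) resembles the paper's, but two of your key steps have genuine gaps. First, the inductive per-curve bound is false: it is not true that every curve meets at most $\frac{4^{l(w)}}{2n}+4$ points of the opposite family, with the $2n$ coming from ``amortizing'' the local blow-up over the strands. For a word such as $\sigma_1^{l(w)}$ all of the growth concentrates on the curves near the twisted pair of strands, so a single $\beta$ curve can carry essentially all $\sim 4^{l(w)}$ intersections. The paper never proves a per-curve bound; it bounds the \emph{total} intersection number of the nice diagram by $4^{l(w)+1}+8n$ (Lemma~\ref{lemma:complexity_runtime} and Proposition~\ref{prop:diagram_complexity}), and the division by the number of curves enters only through Lemma~\ref{lemma:generator_bound}: the generator count is the permanent of the intersection matrix, and on the level set $\sum_{i,j}M_{i,j}=v$ the permanent is maximized by the evenly distributed matrix $\frac{v}{n}\,\mathrm{Id}_n$. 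That maximization argument is what legitimately replaces your (unprovable) amortized estimate. Relatedly, you assert the diagram is nice without addressing the up to $n-2$ hexagonal bad regions; fixing these by the stabilization trick is what raises the number of curves per family to about $2n-2$ (not $3n$ pairs as you claim).

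Second, your accounting for the exponent $6n$ is wrong, and your claim that the post-enumeration work is ``negligible'' hides the dominant cost. A generator chooses one intersection point per $\alpha$ curve (a matching between the two families), so you do not get to multiply the exponent by counting both families; with roughly $2n-2$ curves per family the generator count is about $\left(\frac{v}{2n}\right)^{2n}$, with exponent $\sim 2n$, not $6n$. In the paper the $6n$ arises because computing the differential requires examining pairs of generators (Lemma~\ref{lemma:differential_complexity}, quadratic in the generator count) and, above all, because extracting $\HFKm$ from the boundary matrix by Gauss--Jordan elimination is cubic in the number of generators (Proposition~\ref{prop:homology_complexity}), giving $O\bigl(\left(\tfrac{v}{n}\right)^{3n}\bigr)$ with $3\times(2n-2)\le 6n$. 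Your proposal omits the homology-computation step entirely, so even if your generator bound were repaired you would not recover the stated run time for the right reason; as written, you reach the correct final expression only because the erroneous $6n$ in your generator count coincidentally matches the exponent produced by the paper's cubic elimination step. The final algebraic manipulation deducing $O(c^{l(w)n}n^{-6n})$ is essentially fine.
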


To prove Theorem~\ref{thm:alg_time}, we begin by associating to any braid word $w \in B_n$ in $n$ strands a natural multi-pointed Heegaard diagram for the corresponding braid closure that we call a ``braid diagram''.  Roughly speaking, a braid diagram is a Heegaard decomposition of the knot complement, modified slightly to incorporate information about the {\it transverse} link naturally associated to the braid word $w$ (see \cite{BVV}).  The multi-pointed Heegaard diagrams one obtains via this process are described in Section~\ref{sub:braid_diagrams}.  They are not necessarily nice in the sense of Sarkar and Wang, but are nearly so -- they can contain at most $n-1$ six-sided bad regions, where $n$ is the braid index.  From here, the key step is to perform an appropriate collection of stabilizations and isotopies to connect the above bad regions to basepoint regions.  The result is a nice diagram for the associated braid closure.

There are many examples for which computing knot Floer homology via our method is significantly faster than the corresponding computation in the grid setting.  Generally speaking, this method is well-adapted to computing knot Floer homology for braid closures with large twist regions.  As a simple example, via our method, the knot Floer complex of the $(2,n)$--torus knot can be computed in polynomial time and has $(2n+1)$ generators, whereas the corresponding computation in the grid setting grows factorially in $n$ and requires $(n+2)!$ generators.  In Section~\ref{sec:example}, we compare the time required to compute knot Floer homology via our method and grid diagrams for general $(p,q)$--torus knots.

Finally, we remark that it should be possible to apply these methods to combinatorially compute the transverse braid invariant $t$, defined in \cite{BVV}.  Doing so will require a careful analysis of the isomorphisms induced on knot Floer homology by curve isotopies described in Section~\ref{sec:nice_diagram}.  This is a problem we plan to return to in a future paper.

\subsection*{Organization} 
\label{sub:organization}
In Section~\ref{sec:preliminaries}, we provide some background on knot Floer homology, grid diagrams and braid diagrams.  The algorithm and computing time required to produce a nice diagram associated to a braid is presented in Section~\ref{sec:the_algorithm}.  In Section~\ref{sec:homology}, we prove Theorem~\ref{thm:alg_time}, establishing an upper bound on the time required to compute the knot Floer homology complex via our algorithm.  Finally, in Section~\ref{sec:example}, we compare and contrast our method with the traditional grid approach by focusing on the example of torus knots. 


\subsection*{Acknowledgements} 
\label{sub:acknowledgements}

We would like to thank Samuel Connolly, Robert Lipshitz, Dylan Thurston and Ellen Weld for helpful conversations.  We would also like to extend our sincere gratitude to Louisiana State University for sponsoring the 2012 LSU Research Experience for Undergraduates.  Several of the ideas contained in this work originated as offshoots of this program.  Finally, Lambert-Cole and Vela-Vick would like to acknowledge partial support from NSF Grant DMS-1249708.


\section{Preliminaries} 
\label{sec:preliminaries}

In this section, we collect a few facts about knot Floer homology, Legendrian and transverse knots, and their invariants which will be useful in what follows.

\subsection{Knot Floer homology} 
\label{sub:hfk}

In this subsection, we review some basic definitions and results from knot Floer homology.  Here and for the remainder of the paper we work with coefficients in $\F = \Z/2$.  For a more in-depth and elementary treatment, we refer the interested reader to \cite{OS3,OS5}.

A multi-pointed Heegaard diagram for a (null-homologous) knot or link $K \subset S^3$ is a tuple $\SH = (\Sigma,\alphas,\betas,\z,\w)$ consisting of the following:
\begin{itemize}
	\item A genus $g$ Riemann surface $\Sigma$
	\item Collections of disjoint simple, closed curves $\alphas = \{ \alpha_1 \cup \dots \cup \alpha_{g+n-1} \}$ and $\betas = \{ \beta_1 \cup \dots \cup \beta_{g+n-1}\}$, each of which span $g$--dimensional subspaces of $\mr{H}_1(\Sigma;\Z)$
	\item Collections of basepoints $\z = \{z_1,\dots, z_{n} \}$ and $\w = \{w_1,\dots,w_{n}\}$ such that each component of $\Sigma \backslash (\cup \alpha_i)$ and $\Sigma \backslash (\cup \beta_i)$ contains exactly one $z_i$ and one $w_i$.
\end{itemize}
We require that the triple $(\Sigma,\alphas,\betas)$ be a Heegaard diagram for the 3--sphere, and that the knot $K \subset S^3$ is specified by the collections of basepoints $\z$ and $\w$ as follows.  First, choose oriented, embedded arcs $\gamma_1,\dots, \gamma_{n}$ in the complement $\Sigma \backslash (\cup \alpha_i)$ connecting the $z$ to $w$--basepoints.  Next, choose oriented, embedded arcs $\delta_1,\dots,\delta_{n}$ in the complement $\Sigma \backslash (\cup \beta_i)$ connecting the $w$ to $z$--basepoints.  Finally, depressing the interiors of these arcs into the $\alpha$ and $\beta$--handlebodies respectively, we require that their union $K = (\cup \gamma_i)\cup(\cup \beta_i)$ specify the knot $K$.

To each basepoint $w_i \in \w$, we associate a formal variable $U_{w_i}$.  The knot Floer complex $\CFKm(\SH)$ is then the free $\Z[U_{w_1},\dots,U_{w_{n}}]$--module generated by the intersections of the two $(g+n-1)$--dimensional tori $\TT_\alpha = \alpha_1 \cup \dots \cup \alpha_{g+n-1}$ and $\TT_\beta = \beta_1 \cup \dots \cup \beta_{g+n-1}$ inside the $(g+n-1)$--fold symmetric product $\mr{Sym}^{g+n-1}(\Sigma)$.  The differential on $\CFKm(\SH)$ is obtained as follows.  Let $\x, \y \in \TT_\alpha \cap \TT_\beta$ be a pair of generators for $\CFKm(\SH)$, $\phi \in \pi_2(\x,\y)$ a Whitney disk connecting them, and $J_t$ a generic path of almost complex structures on $\mr{Sym}^{g+n-1}(\Sigma)$.  Let $\M(\phi)$ be the moduli space of pseudo-holomorphic representatives of the disk $\phi$, and denote by $\widehat{\M}(\phi)$ its quotient by the natural $\R$--action given by translation.  The differential on $\CFKm(\SH)$ is then given by
\[
	\partial^- \x = \sum_{\y \in \TT_\alpha \cap \TT_\beta} \sum_{\substack{\phi \in \pi_2(\x,\y),\\ \mu(\phi) = 1,\\ n_\z(\phi) = 0}} \# \widehat{\M}(\phi) \cdot U_{w_1}^{n_{w_1}(\phi)} \dots U_{w_{n}}^{n_{w_{n}}(\phi)} \cdot \y,
\]
where $\mu(\phi)$ is the Maslov index of $\phi$ and $n_{w_i}(\phi)$ denotes the algebraic intersection number of the Whitney disk $\phi$ with the subvariety $\{w_i\} \times \mr{Sym}^{g+n-2}(\Sigma)$.

The knot Floer complex possesses two natural types of gradings.  The first is the Maslov (homological) grading, which is an absolute $\Q$--grading, specified up to an overall shift by the relation
\[
	M(\x) - M(\y) = \mu(\phi) - 2 \cdot \sum_i n_{w_i}(\phi),
\]
for any pair of generators $\x,\y \in \TT_\alpha \cap \TT_\beta$ and Whitney disk connecting them $\phi \in \pi_2(\x,\y)$, and the requirement that multiplication by any of the formal variables $U_{w_i}$ drop Maslov grading by 2.  The second type of grading is known as the Alexander grading.  If $K = K_1 \cup \dots \cup K_\ell$ is an $\ell$--component link, then for each $i \in \{1,\dots,\ell\}$, the Alexander grading associated to $K_i$ is an absolute $\Q$--grading, specified up to an overall shift by the relation
\[
	A_{K_i}(\x) - A_{K_i}(\y) = n_{\z_{K_i}}(\phi) - n_{\w_{K_i}}(\phi),
\]
where $\x$, $\y$ and $\phi$ are as above, $\z_{K_i}$ and $\w_{K_i}$ are the collections of basepoints corresponding to the link component $K_i$, and the requirement that multiplication by a formal variable $U_{w_j}$ drop $A_{K_i}$ by 1 if $w_j \in \z_{K_i}$ and otherwise preserve the grading. 

The ``minus'' version of knot Floer homology is the homology of the complex $(\CFKm(\SH),\partial^-)$:
\[
	\HFKm_*(K) := \mr{H}_*(\CFKm(\SH),\partial^-).
\]
When $w_i$ and $w_j$ are basepoints corresponding to the same component of the link $K$, then their associated formal variables $U_{w_i}$ and $U_{w_j}$ act identically on $\HFKm(K)$.  Choose for each component $K_i$ of the link $K$, a formal variable $U_i$ associated to some basepoint for $K_i$.  Then the knot Floer homology $\HFKm(K)$ is an invariant of the link $K \subset S^3$, which is well-defined up to graded $\F[U_1,\dots,U_\ell]$--module isomorphism.

There are two additional associated homology theories with which one commonly works.  The first is known as the ``hat'' version of knot Floer homology and is obtained as follows.  For each component $K_i \in K$, set exactly one of its associated formal variables $U_{w_i} = 0$ and denote by $\widehat{\partial}$ the associated differential on the quotient complex $(\CFKh(K),\widehat{\partial})$.  It follows that the homology
\[
	\HFKh_*(K) := \mr{H}_*(\CFKh(\SH),\widehat{\partial}),
\]
is an invariant of the link $K$ up to $\F$--module isomorphism.  Finally, it is often convenient to work with the further quotient of $(\CFKh(\SH),\widehat{\partial})$ that is obtained by setting the remaining formal variables $U_{w_j} = 0$.  The result is known as the ``tilde'' version of knot Floer homology and its complex is denoted $(\CFKt(\SH),\widetilde{\partial})$.  The associated homology
\[
	\HFKt_*(K) := \mr{H}_*(\CFKt(\SH),\widetilde{\partial})
\]
is an invariant of the link $K$ together with the number of basepoints $(n_1,\dots,n_\ell)$ in $\w$ corresponding to each component $K_1,\dots,K_\ell$ of $K$.  As $\F$--modules, we have that
\[
	\HFKt(K) \simeq \HFKh(K) \otimes V_1^{n_1-1} \otimes \dots \otimes V_\ell^{n_\ell-1},
\]
where $V_i$ is a rank 2 vector space spanned by vectors $v$ and $w$ with multi-gradings $M(v) = A_{K_j}(v) = 0$ for all $j$, and $M(w) = A_{K_i}(w) = -1$, $A_{K_j}(w) = 0$ for $j \neq i$.


\subsection{Combinatorial computations and grid diagrams} 
\label{sub:combinatorial_computations_and_grid_diagrams}

In \cite{SW}, Sarkar and Wang described a combinatorial method for computing Heegaard Floer invariants via so-called nice diagrams.

\begin{definition}\label{def:nice}
	A multi-pointed Heegaard diagram $\SH = (\Sigma,\alphas,\betas,\z,\w)$ is called {\it nice} if every region in $\Sigma \bs (\alphas \cup \betas)$ not containing a $z$--basepoint is topologically a disk with at most 4 corners.  In other words, every region in the complement of the $\alpha$ and $\beta$--curves either contains a $z$--basepoint or is a bigon or square.
\end{definition}

If a multi-pointed Heegaard diagram $\SH$ is nice, Sarkar and Wang showed that the differential on $\CFKm(\SH)$ can be computed combinatorially by counting embedded, empty rectangles and bigons connecting generators.  They further showed algorithmically how, through a sequence of handleslides and isotopies in the complement of basepoints, any multi-pointed Heegaard diagram can be transformed into one which is nice.  

If $K \subset S^3$ is a knot or link in the 3--sphere, then $K$ can be represented combinatorially via a {\it grid diagram}.  A grid diagram $G = (n;\mathbb{X},\mathbb{O})$ consists of the following:
\begin{enumerate}
	\item An $n \times n$ square planar grid,
	\item Collections $\mathbb{X} = (X_1,\dots,X_n)$ and $\mathbb{O} = (O_1,\dots,O_n)$ of $X$'s and $O$'s in the squares of the grid such that each row and each column contains exactly one $X$ and one $O$, and no square is occupied by both an $X$ and an $O$.
\end{enumerate}
To such a grid diagram $G$, one associated a knot or link in $S^3$ as follows.  First, draw (oriented) vertical line segments connecting the $X$'s and $O$'s in each column.  Next, draw (oriented) horizontal line segments connecting the $O$'s and $X$'s so that the horizontal strands pass underneath the vertical strands.  If we allow the $X$'s and $O$'s to play the roles of $z$'s and $w$'s respectively, then grid diagrams are clearly nice in the sense of Sarkar and Wang, and thus provide an avenue for combinatorially computing knot Floer homology.

In the context of Heegaard Floer theory, grid diagrams first appeared in the work of Manolescu, Ozsv\'ath and Sarkar \cite{MOS}, and Manolescu, Ozsv\'ath, Szab\'o and Thurston \cite{MOST}.  Since their initial appearance, grid diagrams have proven an essential tool for computing knot Floer homology and studying its applications to problems in topology, and in contact and symplectic geometry.


\subsection{Braids and their associated Heegaard diagrams} 
\label{sub:braid_diagrams}

Recall the well-known isomorphism between the braid group $B_n$ and the mapping class group $\mathcal{M}(D,n)$ of the disk with $n$ punctures.  

The braid group $B_n$ is the finitely presented group
\[B_n := \left\langle \sigma_1,\dots,\sigma_{n-1} \middle|
\begin{array}{r l l}
\sigma_i \sigma_{i+1} \sigma_i \, \, \,=& \sigma_{i+1} \sigma_i \sigma_{i+1} & \\
\sigma_i \sigma_j \,\, \,=& \sigma_j \sigma_i & \quad \text{if } |i-j| \geq 2
\end{array}
\right\rangle \]
This presentation of $B_n$ is the {\it Artin presentation} and the elements $\sigma_1,\dots,\sigma_{n-1}$ are {\it Artin letters}.

Let $Q = \{1,2,\dots,n\}$ be a collection of $n$ points in $\mathbb{C}$ and let $D$ be an embedded disk in $\mathbb{C}$ containing $Q$.  A self-homeomorphism $\phi$ of the pair $(D,Q)$ is an orientation-preserving homeomorphsim $\phi: D \rightarrow D$ that fixes $Q$ setwise and $\partial D$ pointwise.  The \textit{mapping class group} $\cM(D,n)$ is the group of isotopy classes of self-homeomorphisms $\phi$, with group multiplication given by composition of maps.  

A \textit{spanning arc} is a properly embedded arc $\gamma$ in the interior of $D$ such that the boundary of $\gamma$ is two disjoint points in $Q$ and the interior of $\gamma$ is disjoint from $Q$.  A \textit{half-twist} $\tau_{\gamma}$ along a spanning arc $\gamma$ is the following self-homeomorphism of the disk.  Choose a neighborhood $\nu(\gamma)$ of $\gamma$ in $D$ disjoint from the other points in $Q$ and an orientation-preserving identification $\psi$ of $\nu(\gamma)$ with the disk $B = \{|z| \leq 3\} \subset \CC$ that sends $\gamma$ to the interval $[-1,1]$.  Choose a continuous function $g: \RR \rightarrow \RR$ with $g(x) = 0$ for $x \geq 3$ and $g(x) = -1$ for $x \leq 2$.  Let $f: \CC \rightarrow \CC$ be the self-homeomorphism of $\CC$ defined by $f(z) = e^{g(|a|) \imath \pi} z$.  Then the half-twist $\tau_{\gamma}$ along $\gamma$ is defined to be $\tau_{\gamma} = \psi^{-1} \circ f \circ \psi$.  Note that a half-twist is detemined by the image of a curve that transversely intersects the spanning arc $\gamma$.  See Figure~\ref{figure:half_twist}.

\begin{figure}[!htbp]
\centering
\labellist
	\small\hair 2pt
	\pinlabel $\gamma$ at 80 160
\endlabellist
\includegraphics[width=.5\textwidth]{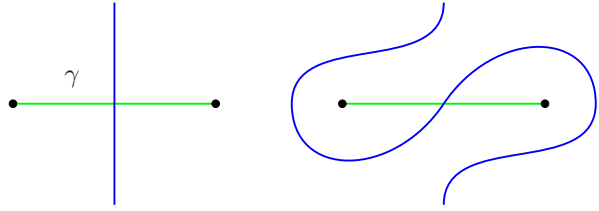}
\caption{A half-twist $\tau_{\gamma}$ along the spanning arc $\gamma$}
\label{figure:half_twist}
\end{figure}

Let $\gamma_i = [i,i+1] \times \{0\}$ denote the spanning arc connecting the $i^{\text{th}}$ and $(i+1)^{\text{st}}$ basepoint in $Q$.  Then there is a group isomorphism 
\[B_n \simeq \cM (D,n)\]
given by the map $\sigma_i \leftrightarrow \tau_{\gamma_i}$ that sends the $i^{\text{th}}$ Artin generator to the half twist along the $i^{\text{th}}$ spanning arc.

Now, let $L$ be a link in $S^3$, $\delta \in B_n$ a $n$--strand braid whose closure is $L$, and $\phi \in \cM(D,n)$ the mapping class corresponding to $\delta$.

In the disk $D$, label the basepoints $z_j = (j,0)$ and choose a {\it basis} $\as$ for $(D, \zs)$, that is, a collection $\{a_1,\dots,a_{n-1} \}$ of properly embedded arcs in $D$ such that each component of $D \setminus \as$ contains exactly one $z$ basepoint.  Index $\as$ so that $z_j$ lies in the component bounded by $\del D, a_{j-1}, a_j$.  Let $\mathbf{b}$ be a second basis where each $b_j$ is obtained by pushing off $a_j$ along the orientation of $\del D$ and isotoping so that $a_j$ and $b_j$ intersect transversely in a unique point.

 Let $D'$ denote a second copy of this disk, with identical basepoints $\ws = \{w_j\}$ and first basis $\as' = \{a'_j\}$.  Let $\mathbf{b}'$ be a second basis defined by setting $b'_j = \phi(b_j)$.  We can assume, after possibly a perturbation of $\phi$, that each pair $a'_i$ and $b'_j$ intersect transversely.

\begin{figure}[!htbp]
\centering
\labellist
	\small\hair 2pt
	\pinlabel $z_1$ at 34 145
	\pinlabel $a_1$ at 70 50
	\pinlabel $b_1$ at 55 215
	\pinlabel $z_2$ at 114 145
	\pinlabel $a_2$ at 132 50
	\pinlabel $b_2$ at 119 215
	\pinlabel $z_3$ at 179 145
	\pinlabel $a_3$ at 197 50
	\pinlabel $b_3$ at 181 215
	\pinlabel $z_4$ at 242 145
	\pinlabel $z_{n-2}$ at 340 145
	\pinlabel $a_{n-2}$ at 346 50
	\pinlabel $b_{n-2}$ at 333 215
	\pinlabel $z_{n-1}$ at 407 145
	\pinlabel $a_{n-1}$ at 410 50
	\pinlabel $b_{n-1}$ at 399 215
	\pinlabel $z_n$ at 483 145
	\pinlabel $\dots$ at 285 130
\endlabellist
\includegraphics[width=.6\textwidth]{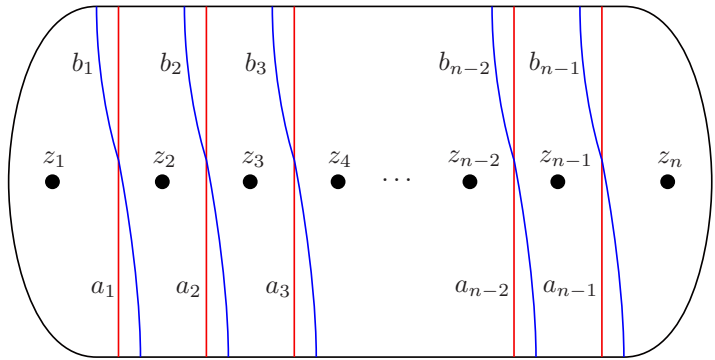}
\caption{The disk $D$ with bases $\as, \mathbf{b}$}
\label{figure:top_disk}
\end{figure}

\begin{figure}[!htbp]
\centering
\includegraphics[width=.5\textwidth]{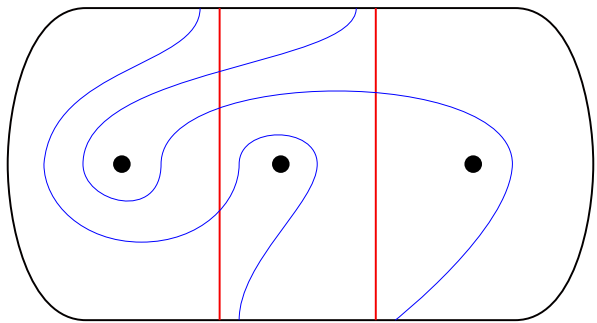}
\caption{The disk $-D'$ for $n=3$ and $\delta = \sigma_2 \sigma_1$}
\label{figure:bottom_disk}
\end{figure}

Let $\Sigma = D \cup - D'$ and for $i = 1,\dots,n$, let $\alpha_i = a_i \cup a_i'$ and $\beta_i = b_i \cup b_i'$.

\begin{lemma} 
\label{lemma:braid_Heegaard_diagram}
For an oriented link $L$, let $\delta \in B_n$ be a braid whose closure is $L$ and let $\phi \in \mathcal{M}(D,n)$ be the mapping class identified with $\delta$.  Then $\SH_{\phi} = (\Sigma, \alphas, \betas, \zs, \ws)$ is a multi-pointed Heegaard diagram for $-L$.\footnote{That the Heegaard diagram $\SH_{\phi}$ specifies the oriented link $-L$ as opposed to $L$ arises as a consequence of our choice to place the $z$-basepoints on $D$.  This was done to allow for future applications to the study of transverse knot theory, a topic we hope to return to later.} 
\end{lemma}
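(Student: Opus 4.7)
The plan has three steps: verify that $(\Sigma, \alphas, \betas)$ is a Heegaard diagram for $S^3$, check the multi-pointed basepoint conditions, and exhibit arcs realizing the oriented link $-L$.

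For the first step, $\Sigma = D \cup -D'$ is a union of two disks along their common boundary and is therefore the $2$-sphere. Each $\alpha_i = a_i \cup a_i'$ is a simple closed curve, since $a_i \subset D$ and $a_i' \subset -D'$ are properly embedded arcs whose endpoints on $\partial D = \partial D'$ agree under the identification of $-D'$ with a second copy of $D$. For $\beta_i = b_i \cup b_i'$ the analogous claim uses that $\phi$ fixes $\partial D$ pointwise, so $b_i' = \phi(b_i)$ has the same boundary endpoints as $b_i$. Disjointness of the arcs within each basis implies disjointness of the resulting curves. Since every simple closed curve on $S^2$ bounds disks on both sides of $\Sigma \subset S^3$, the triple $(\Sigma, \alphas, \betas)$ presents the standard genus-zero Heegaard splitting of $S^3$, with the $\alpha$-handlebody taken to be the $3$-ball on the $D$-side and the $\beta$-handlebody the $3$-ball on the $-D'$-side.

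For the second step, the basis $\as$ cuts $D$ into $n$ strips containing $z_1,\ldots,z_n$, and $\as'$ cuts $-D'$ into corresponding strips containing $w_1,\ldots,w_n$; the $j$-th strip of $D$ glues to the $j$-th strip of $-D'$ along matching arcs of $\partial D$, so each region of $\Sigma \setminus \alphas$ contains exactly one $z_j$ and one $w_j$. For $\betas$, since $\phi$ fixes $\partial D$ pointwise, the bases $\mathbf{b}$ and $\mathbf{b}' = \phi(\mathbf{b})$ share boundary endpoints on $\partial D$; consequently the region $\tilde{R}_j \subset D \setminus \mathbf{b}$ containing $z_j$ glues across $\partial D$ to the region $\phi(\tilde{R}_j) \subset -D' \setminus \mathbf{b}'$, which contains $\phi((j,0)) = w_{\sigma(j)}$, where $\sigma \in S_n$ is the permutation of $Q$ induced by $\phi$. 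Hence each region of $\Sigma \setminus \betas$ contains exactly one $z$-basepoint and one $w$-basepoint.

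For the third step, I exhibit arcs realizing the link. In the $\alpha$-region containing $z_j$ and $w_j$, take $\gamma_j$ to be a simple arc from $z_j$ to $w_j$ crossing $\partial D$ transversely once; pushed into the $\alpha$-handlebody, the $\{\gamma_j\}$ form the $n$ closure-cap arcs sitting above the braid. In the $\beta$-region containing $w_j$ and $z_{\sigma^{-1}(j)}$, take $\delta_j$ to be a simple arc from $w_j$ to $z_{\sigma^{-1}(j)}$; pushed into the $\beta$-handlebody, the portion of $\delta_j$ in $-D'$ follows the image under $\phi$ of an essentially vertical arc and therefore traces a strand of the braid $\delta$. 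Concatenating $\gamma$'s and $\delta$'s in the orientation $z_j \to w_j \to z_{\sigma^{-1}(j)} \to \cdots$ produces a closed $1$-manifold in $S^3$ whose components correspond to the cycles of $\sigma$, namely the braid closure. The anticipated main obstacle is bookkeeping the orientation: the construction orients the resulting link with its closure caps running from $z_j$ (on the top disk $D$) to $w_j$ (on the bottom disk $-D'$), whereas the standard orientation on $L$ has braid strands descending from top to bottom so that its closure caps run from bottom to top. The two orientations differ by a global reversal, yielding $-L$ and matching the footnote in the lemma's statement. The most delicate verification is that the push-ins of the $\delta_j$'s truly recover $\delta$ up to isotopy; for this I would appeal to the explicit isomorphism $B_n \simeq \cM(D,n)$ recalled in the preliminaries, which identifies a braid strand with the track of a basepoint under an isotopy realizing $\phi$.
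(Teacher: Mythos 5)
Your proposal is essentially correct, but note that the paper does not actually prove this lemma: it is stated without proof, with the construction attributed to the braid/open-book diagrams of Baldwin--V\'ertesi--Vela-Vick \cite{BVV}, where $\Sigma = D \cup -D'$ is viewed as two pages of the disk open book and the two handlebodies as the two halves of $S^3$ swept out by pages, the braid being transverse to the pages. Your argument is a self-contained diagrammatic verification of the same facts: gluing two disks gives $S^2$, each $\alpha_i = a_i \cup a_i'$ and $\beta_i = b_i \cup \phi(b_i)$ closes up because $\phi$ fixes $\partial D$ pointwise, any genus-zero multi-pointed diagram presents $S^3$, and the counting of complementary regions (each of the $n$ components of $\Sigma \setminus \alphas$, resp.\ $\Sigma \setminus \betas$, is one region from each disk glued along $\partial D$, hence contains exactly one $z$ and one $w$) is correct, with the $\betas$-regions pairing $z_j$ with $w_{\sigma(j)}$ for $\sigma$ the permutation underlying $\delta$. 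What your write-up buys is an explicit check of the basepoint axioms that the paper leaves implicit; what it leaves at the same level of precision as the paper are the two genuinely geometric points: (i) that pushing the $\delta_j$ arcs into the $\beta$-handlebody recovers the braid $\delta$ up to isotopy, which you correctly identify as the crux and for which the clean argument is the open-book parametrization of the $\beta$-handlebody as a mapping cylinder of the half-monodromy (equivalently, the track-of-basepoints description of $B_n \simeq \mathcal{M}(D,n)$ you cite); and (ii) the orientation bookkeeping yielding $-L$ rather than $L$, which you assert by a ``global reversal'' heuristic -- this is convention-dependent and is asserted, not derived, but it matches exactly the level of justification the paper's own footnote gives. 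Neither point is a gap relative to the paper, which offers no proof at all, but if you wanted a complete argument you should make (i) precise via the page-sweeping identification rather than by appeal to plausibility.
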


\begin{remark}
\label{rem:dual_diagram}
It is often useful to consider the ``dual'' diagram obtained by using $\phi^{-1}$ to reparametrize $\Sigma$.  Diagrammatically, this is accomplished by instead choosing bases $\as' = \{a'_i = \phi^{-1}(a_i)\}$ and $\mathbf{b}' = \{b'_i = b_i\}$ for $D'$ before constructing $\SH_{\phi}$.  In general, modifying the $\mathbf{b}'$ by some $\phi$ is equivalent to instead modifying the $\as'$ by $\phi^{-1}$.
\end{remark}

We call a multi-pointed Heegaard diagram $\SH_{\phi}$ obtained in the above fashion a {\it braid diagram}.  In the context of Heegaard Floer theory, such diagrams first appeared in the work of Baldwin, V\'ertesi and the third author in their work establishing an equivalence relating certain transverse invariants in knot Floer homology \cite{BVV}.  There, it is shown that the distinguished collection of intersections $\x$ lying on the disk $D \subset \Sigma$ together define a cycle in $\CFKm(\Sigma, \betas, \alphas, \z, \w)$ whose associated homology class $[\x]$ is an invariant of the transverse knot of link type of the associated braid.



\section{The algorithm} 
\label{sec:the_algorithm}

In order to algorithmically compute the Knot Floer Homology of an oriented link $L \subset S^3$, we need to construct a multi-pointed Heegaard diagram for $L$ that is nice, in the terminology of Sarkar-Wang.

Starting with a braid $B \in B_n$ whose closure is $-L$ and a braid word $w$ in the Artin generators representing $B$, our approach is to
\begin{enumerate}
\item Use the braid word $w$ to construct a specific, ``efficient'' self-homeomorphism of the disk with $n$ punctures whose mapping class corresponds to $B$
\item Use this self-homeomorphism to construct an efficient multi-pointed Heegaard diagram for the original link $L$, as described in Subsection \ref{sub:braid_diagrams}
\item Apply an appropriately modified version of the Sarkar-Wang algorithm to make the diagram nice.
\end{enumerate}

\subsection{Nice diagram} 
\label{sec:nice_diagram}
In this first subsection, we will describe the Heegaard diagram abstractly before describing how to algorithmically obtain the diagram in Subsection \ref{subsec:Algorithm}.

Let $A_i$ and $B_i$ denote the components of $\Sigma \setminus \alphas$ and $\Sigma \setminus \betas$, respectively, containing the $i^{\text{th}}$ basepoint $z_i$.  Thus, $\partial A_i = \alpha_i - \alpha_{i-1}$ and $\partial B_i = \beta_i - \beta_{i-1}$.

A \textit{trivial bigon} is a region in the disk $D'$, disjoint from any basepoint $w_i$, whose boundary splits into two connected components, one a segment of an $\alpha$--curve and the other a segment of a $\beta$--curve.  Such a bigon can be eliminated by an isotopy of the $\alpha,\beta$ curves as it contains no basepoint to obstruct this isotopy.  A monodromy $\phi$ that produces no trivial bigons in the Heegaard diagram is called {\it efficient}. 

Choosing an efficient $\phi$ ensures that the diagram $\SH_{\phi}$ is already very close to being nice.

\begin{lemma}
\label{lemma:low_badness}
Every elementary region of $\SH_{\phi}$ not containing some $z_j$ is a $2k$--gon.  Furthermore, suppose that $\phi$ is efficient and let $i = 1,\dots,n$.  Then among the elementary regions of $A_i$, there is at most one 6--gon and no $2k$--gons for $k \geq 4$.  Similarly, among the elementary regions of $B_i$, there is at most one 6--gon and no $2k$--gons for $k \geq 4$.
\end{lemma}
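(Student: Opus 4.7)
The plan is to combine a parity argument for the first claim with an Euler characteristic calculation and a local analysis of the $D$-side of $A_i$ for the second. The first statement is immediate: at each corner of an elementary region, an $\alpha$- and a $\beta$-curve meet transversely, so traversing the boundary alternates between $\alpha$- and $\beta$-arcs, forcing an even number of sides.

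For the $A_i$ bound, fix $i$ and let $c$ denote the number of $\beta$-chords of $A_i$ (maximal arcs of $\betas \cap A_i$ with endpoints on $\partial A_i = \alpha_{i-1} \cup \alpha_i$). The induced cellular decomposition has $V = 2c$ vertices, $E = 3c$ edges ($c$ interior $\beta$-arcs and $2c$ boundary $\alpha$-segments), and $F = c + \chi(A_i)$ elementary regions. Letting $n_k$ denote the number of $2k$-gons and counting sides with multiplicity gives $\sum_k 2k\, n_k = 4c$, whence
\[
\sum_{k \geq 1}(k-2)\,n_k \;=\; 2c - 2F \;=\; -2\chi(A_i).
\]
Efficiency of $\phi$ forces every bigon to contain a basepoint, and since $A_i$ contains only $z_i$ and $w_i$, this gives $n_1 \leq 2$.

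When $A_i$ is a disk ($i = 1$ or $n$), $\chi(A_i) = 1$, so $n_1 = 2 + \sum_{k \geq 3}(k-2)n_k$; combined with $n_1 \leq 2$ this forces $n_k = 0$ for all $k \geq 3$. When $A_i$ is an annulus ($1 < i < n$), $\chi(A_i) = 0$ and $n_1 = \sum_{k \geq 3}(k-2)n_k$; the stated bound then demands the refinement $n_1 \leq 1$. To obtain this, I would argue that $z_i$ cannot lie in a bigon: on $D$, the only $\beta$-arcs meeting $A_i \cap D$ are the half-arcs coming from the pushoffs $b_{i-1}$ and $b_i$, which cut $A_i \cap D$ into two small corner pieces and a central hexagonal piece containing $z_i$, bounded by two $\alpha$-sides, two $\beta$-sides, and two $\partial D$-segments. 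The four non-$\partial D$ sides persist as distinct boundary arcs of the global region $R \subset \Sigma$ containing $z_i$: any pairwise collapse induced by gluing across $\partial D$ to the $D'$-side would manifest as a trivial bigon on $D'$, contradicting efficiency. Hence $R$ has at least four sides, so $n_1 \leq 1$, and consequently $n_3 \leq 1$ with $n_k = 0$ for $k \geq 4$. The $B_i$ statement follows by passing to the dual diagram of Remark \ref{rem:dual_diagram}, which interchanges the roles of $\alpha$- and $\beta$-curves.

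The technical heart of the argument is the last refinement step: rigorously verifying that none of the four $D$-side sides of the $z_i$-region can be cancelled by the gluing across $\partial D$ when $\phi$ is efficient. This requires a careful local analysis of the arcs $b'_j = \phi(b_j)$ adjacent to the two $\partial D$-segments bordering $z_i$ in $A_i \cap D$, and is where the no-trivial-bigon hypothesis does the key work.
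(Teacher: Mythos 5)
Your parity observation and the Euler-characteristic bookkeeping are fine as far as they go, but the skeleton of the argument has two genuine gaps, and they interact. First, the opening sentence of the lemma is not ``immediate'': parity only shows that each boundary circle of an elementary region has evenly many corners, whereas ``$2k$--gon'' asserts the region is a disk. Non-disk regions really do occur: already for the identity braid (or for any $A_i$ away from the support of the braid word) the region $A_i$ decomposes into two thin bigons, coming from the pushoffs $\beta_{i-1},\beta_i$ of $\alpha_{i-1},\alpha_i$, together with a central \emph{annular} region containing $z_i$ and $w_i$. The paper's proof spends most of its effort exactly here: it shows a region's boundary can involve only the four curves $\alpha_{i-1},\alpha_i,\beta_{j-1},\beta_j$, that each closed curve can contribute to at most one boundary component (so a region is a disk or an annulus), and that an annular region must contain $z_i$ because there is an arc in $D$ through $z_i$ joining $\alpha_{i-1}$ to $\alpha_i$ disjoint from $\betas$. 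Your identity $\sum_k(k-2)n_k=-2\chi(A_i)$ silently assumes every face is a 2--cell, and it fails precisely in the example above, where the left side is $-2$ while $\chi(A_i)=0$; so the disk-ness must be established first (or the annular $z_i$--region handled separately), which is essentially the content you skipped.

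Second, ``efficiency of $\phi$ forces every bigon to contain a basepoint'' is not what the paper's definition says: a trivial bigon is a region \emph{contained in the disk $D'$} with no $w$--basepoint. Empty elementary bigons that straddle $\partial D$ -- the lunes between $\alpha_j$ and its pushoff $\beta_j$ -- survive in efficient diagrams (they are present in $\SH_1$ itself), so the bound $n_1\leq 2$ is unjustified, and the refinement $n_1\leq 1$ in the annulus case, which you acknowledge is the unverified technical heart, would additionally have to control these equator-straddling empty bigons, not only the question of whether $z_i$ sits in a bigon. By contrast, the paper argues locally about a single region $R\subset A_i\cap B_j$: any $\beta$ edge of $\partial R$ returning to the same $\alpha$ curve bounds a bigon which, by efficiency, must contain the unique basepoint $w_i$, and uniqueness of $w_i$ then caps $R$ at a 6--gon and allows at most one 6--gon per $A_i$, with the $B_j$ case obtained by exchanging the roles of $\alphas$ and $\betas$. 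If you want to salvage the global Euler-characteristic route, you must first prove the disk/annulus dichotomy and then replace the false ``all bigons are basepointed'' step with an honest count of the empty straddling bigons; as written the argument does not go through.
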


\begin{proof}

By construction, $A_1, B_1, A_n, B_n$ are topological disks and $A_i, B_i$ are annuli for $i = 2,\dots,n-1$.  Also, no $\alpha_i$ is contained within a single $B_j$ and no $\beta_j$ is contained within a single $A_i$.

Let $R$ be an elementary region, contained in some $A_i$ and in some $B_j$.  Thus, segments of the boundary of $R$ can only form parts of four curves: $\alpha_{i-1},\alpha_i,\beta_{j-1},\beta_j$.  Moreover, each boundary component must have $\alpha$ and $\beta$ edges.

Note that each $\alpha$ and $\beta$ can appear as an edge in at most 1 boundary component. Since $\Sigma = S^2$, there is a 1-1 correspondence between the boundary components of $R$ and connected components of $\Sigma \setminus R$.  Therefore, a path in $\Sigma$ between boundary components of $R$ must be contained in $R$.  Because each $\alpha,\beta$ is closed, if it forms part of two boundary components, it must lie entirely within $R$.  But since $R$ is elementary, that curve is actually the totality of a single boundary component.

Thus, $R$ has either 1 or 2 boundary components and can be either a topological disk or an annulus.

Suppose that $R$ is an annulus and let $\epsilon$ be an properly embedded arc in $A_i$ connecting $\alpha_{i-1}$ to $\alpha_i$.  Then either $\epsilon$ is contained in $R$ or $\epsilon$ intersects the $\beta$ edges of $\del R$ at least once.  There is clearly an arc in the disk $D$ connecting $\alpha_{i-1}$ to $\alpha_i$ that passes through $z_i$ and is disjoint from every $\beta$ curve.  Hence $R$ must contain $z_i$.

Now suppose $R$ is a $2k$--gon.  If some $\beta$ edge of $\del R$ connects an $\alpha$ curve to itself, the union of this edge and some edge of that $\alpha$ forms a bigon.  Since $\phi$ is efficient, that bigon must contain a basepoint and that basepoint must be $w_i$.  Since $w_i$ is unique, for each pair of bigons formed in this way, one must either contain the other.  If there are multiple such bigons, then $R$ must be contained in one and $R$ is a square.

If $R$ is not a bigon or such a square, then exactly two $\beta$ edges of $\del R$ connect $\alpha_{i-1}$ and $\alpha_i$.  There can be at most one more $\beta$ edge connecting some $\alpha$ to itself, so $R$ is either a square or a 6--gon.

Finally, suppose that there are two elementary 6--gons $R_1,R_2$ within $A_i$.  Then one, say $R_2$, must be contained in the nontrivial bigon associated to $R_1$.  However, this is a contradiction since $R_2$ must therefore be a square.

The exact same argument with $A_i$ replaced by $B_j$ and the $\alpha$'s and $\beta$'s exchanged proves the statement for $B_j$. 
\end{proof}

\begin{remark}
\label{rem:max_badness}
In the terminology of Sarkar-Wang, this implies that the maximal badness of any region is 1 and that there are at most $n-2$ bad regions.
\end{remark}

Now, following the approach of \cite{Hales}, we use the following {\it stabilization trick} to make the Heegaard diagram nice.

\begin{definition} Let $\SH_{\phi}$ be a Heegaard diagram with an empty, elementary 6--gon $R \subset A_i$.  The {\it stabilization trick} consists of 
\begin{enumerate}
\item Stabilize $\SH_{\phi}$ by attaching a 1--handle to $\Sigma$ with the attaching sphere given by two points, one in $R$ and the other in a $z$--basepointed region
\item Apply the Sarkar-Wang algorithm
\end{enumerate}

More specifically, choose a point $x$ in $R$ near some $\alpha$ boundary edge and a pushoff of that $\alpha$ curve containing $x$.  Let $\widehat{\alpha_i}$ be an oriented subsegment of this pushoff disjoint from the bigon containing the basepoint $w_i$ and whose boundary is $x \cup y$ for some point $y$ in the region containing $z_i$.  Then attach a 1--handle to $\Sigma$ with attaching sphere $x \cup y$ and extend $\widehat{\alpha_i}$ across the 1--handle to a closed curve.  Choose $\widehat{\beta_i}$ to be a meridian or belt-sphere of this 1--handle.  It is isotopic to a pushoff of the boundary of the 6--gon.

Finally, perform ``finger moves'' by pushing $\widehat{\beta_i}$ across the three $\alpha$ edges and continue until reaching basepointed regions.  This is easiest to see in the ``dual'' diagram in Figure~\ref{figure:stabilizedB}.

\begin{figure}
\centering
\begin{subfigure}{.45\textwidth}
	\centering
	\labellist
		\small\hair 2pt
		\pinlabel $\widehat{\alpha_i}$ at 207 320
		\pinlabel $\widehat{\beta_i}$ at 255 270
		\pinlabel $z_i$ at 92 517
		\pinlabel $w_i$ at 50 264
		\pinlabel $\alpha_i$ at -20 130
		\pinlabel $\alpha_{i+1}$ at 312 130
		\pinlabel $\dots$ at 140 130
		\pinlabel $\dots$ at 120 400
		\pinlabel $\dots$ at 138 265
	\endlabellist
	\includegraphics[width=.5\linewidth]{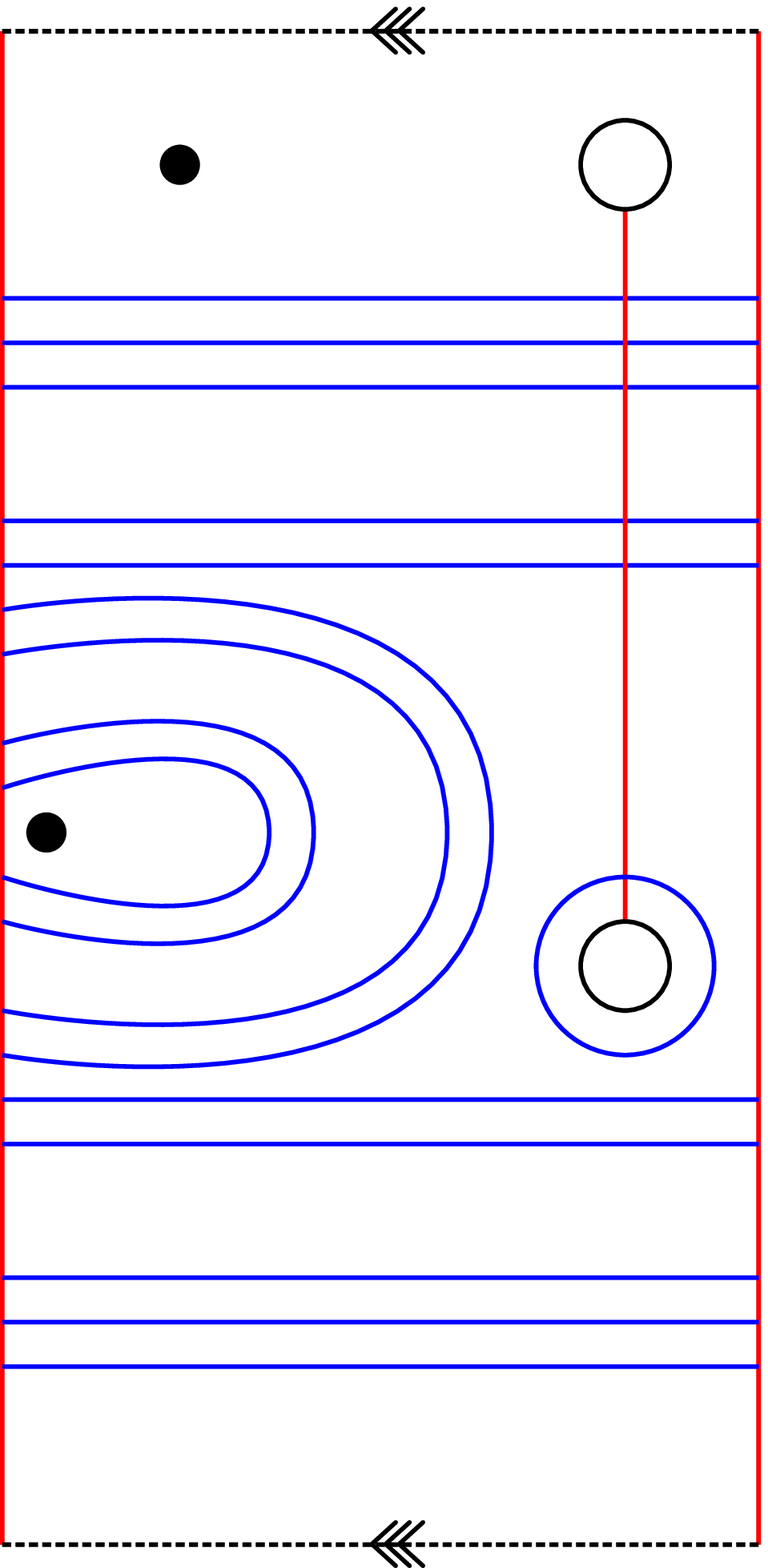}
	\caption{The annulus $A_i$ after stabilization and before finger moves}
	\label{figure:stabilizedA}
\end{subfigure}
\begin{subfigure}{.45\textwidth}
	\centering
	\labellist
		\small\hair 2pt
		\pinlabel $\widehat{\beta_i}$ at 230 325
		\pinlabel $\widehat{\alpha_1}$ at 300 220
		\pinlabel $z_j$ at 92 517
		\pinlabel $w$ at 50 264
		\pinlabel $\beta_j$ at -20 130
		\pinlabel $\beta_{j+1}$ at 310 130
		\pinlabel $\dots$ at 140 130
		\pinlabel $\dots$ at 140 400
		\pinlabel $\dots$ at 138 265
	\endlabellist
	\includegraphics[width=.5\linewidth]{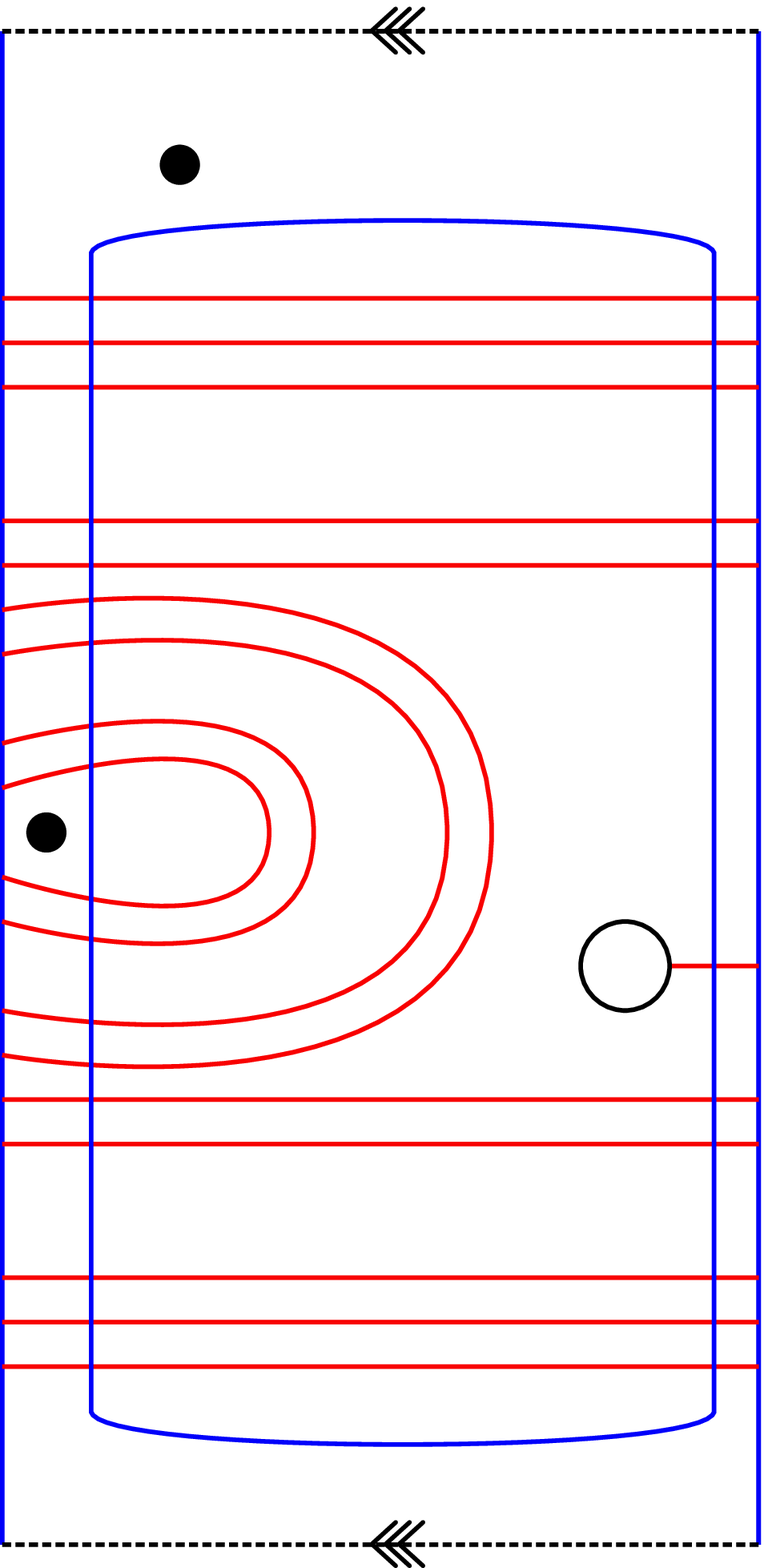}
	\caption{The ``dual'' diagram of $B_j$ after finger moves}
	\label{figure:stabilizedB}
\end{subfigure}
\caption{The Stabilization Trick}
\label{figure:stabilization}
\end{figure}

\end{definition}

Suppose that $\SH_{\phi}$ has $g$ elementary 6--gons.  Let $\SH_{\phi}^{\text{nice}} = (\Sigma_g,\alphas \cup \widehat{\alphas}, \beta \cup \widehat{\betas}, \zs, \ws)$ be the Heegaard diagram obtained by applying the stabilization trick to each 6--gon.

\begin{proposition}
\label{prop:diagram_nice}
The Heegaard diagram $\SH_{\phi}^{\text{nice}}$ is nice.
\end{proposition}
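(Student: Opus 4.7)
The plan is to verify directly that every elementary region of $\SH_\phi^{\text{nice}}$ either contains a $z$-basepoint or is a bigon or square. By Lemma~\ref{lemma:low_badness}, the only elementary regions of $\SH_\phi$ that are neither bigons, squares, nor $z$-basepointed are 6-gons, with at most one in each $A_i$ and one in each $B_j$. It therefore suffices to show that each application of the stabilization trick locally resolves one 6-gon into bigons, squares, and $z$-basepointed regions, while leaving the rest of the diagram unchanged. Moreover, distinct applications of the trick do not interfere: for a 6-gon $R\subset A_i$, the curve $\widehat{\alpha_i}$ is a pushoff of an $\alpha$-edge of $R$ whose endpoint $y$ lies in the $z_i$-region, and $\widehat{\beta_i}$ is a meridian of the 1-handle whose fingers can be kept inside $A_i$; since $z_i\in A_i$, the entire modification is supported in $A_i$ together with the added 1-handle, and dually for the $B_j$ case. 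Hence the tricks may be analyzed in isolation.

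For a single trick on $R\subset A_i$, the next step would be to trace through the configuration. The new curve $\widehat{\alpha_i}$ cuts off a thin strip parallel to one $\alpha$-edge of $R$ and runs across the 1-handle into the $z_i$-region, so it subdivides $R$ into pieces with at most four corners. The meridian $\widehat{\beta_i}$ is then pushed by finger moves across each of the three $\alpha$-edges of $R$, and each resulting finger continues through the adjacent regions of $A_i$ until it reaches the $z_i$-region. By Lemma~\ref{lemma:low_badness}, every elementary region of $A_i$ other than $R$ and the $z_i$-region is a bigon or square, so each finger may be routed through a chain of such regions. Each finger move locally cuts a bigon or square into smaller bigons and squares, so no new region of higher complexity is produced.

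The main obstacle is the geometric bookkeeping implicit in the previous paragraph: one must verify by direct inspection of the local picture (Figure~\ref{figure:stabilization}) that every piece produced by the stabilization and subsequent finger moves has at most four corners, and that the three fingers can be routed consistently -- in particular avoiding the $w_i$-bigon, in accordance with the construction of $\widehat{\alpha_i}$. Once this local verification is carried out for both the $A_i$ and $B_j$ cases, and the unaltered regions away from the modified 6-gons are noted to have been bigons, squares, or $z$-basepointed to begin with, the proposition follows.
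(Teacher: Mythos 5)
Your overall plan (analyze each stabilization trick locally and check that only bigons, squares and basepointed regions result) is reasonable, but the geometric claims you build it on are not correct, and the step you yourself flag as ``the main obstacle'' is precisely the content of the proposition, left unverified. Concretely: the finger moves push $\widehat{\beta_i}$ \emph{across} the three $\alpha$--edges of the hexagon $R$, so the fingers immediately leave the annulus $A_i$ and enter the adjacent components $A_{i\pm 1}$; the modification is therefore not ``supported in $A_i$ together with the added 1--handle,'' and your non-interference argument (distinct tricks supported in disjoint $A_i$'s) does not hold as stated. The correct containment statement is the dual one: since $\widehat{\beta_i}$ is only isotoped across $\alpha$--curves, the fingers remain inside the $\beta$--complement component $B_j$ containing $R$, which is why the construction and Figure~\ref{figure:stabilizedB} are phrased in the dual picture and why the proof of Proposition~\ref{prop:diagram_nice} examines the regions of each $B_i$. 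For the same reason, your appeal to the $A_i$--half of Lemma~\ref{lemma:low_badness} to route the fingers is the wrong half (one needs that all regions of $B_j$ other than $R$ are bigons, squares, or basepointed), and the fingers need not reach ``the $z_i$-region'': they terminate in whatever basepointed regions of $B_j$ they first meet, which may well be $w$--regions.

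More importantly, the mechanism by which the trick eliminates the bad region is not that $R$ gets subdivided into bigons and squares; a region with six corners generically survives the subdivision. The point of attaching the 1--handle from $R$ to the $z_i$--region is that, after the finger moves, the residual large region coming from $R$ is the \emph{same} elementary region as the one containing $z_i$ (they are joined through the tube), and a $z$--basepointed region is permitted to be bad by Definition~\ref{def:nice}. This identification is the key idea of the paper's proof and is absent from your write-up; in its place you defer the decisive local verification (``one must verify by direct inspection of the local picture''), so even granting your framework the proof is incomplete. One must also record, as the paper does, that every region containing a $w_i$--basepoint is a bigon or square unless it also contains $z_i$, since niceness only exempts $z$--regions.
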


\begin{proof}
It is clear that all elementary regions in each $B_i$ are either bigons or 4--gons, except possibly for a unique 6--gon.  However, via the attached 1--handle, this 6--gon is identified with the elementary region containing the basepoint $z_i$.

Furthermore, note that unless $w_i$ and $z_i$ lie in the same elementary region, each region containing a $w_i$ basepoint is either a bigon or a square.
\end{proof}


\subsection{Algorithm} 
\label{subsec:Algorithm}

In this subsection, we will explicitly describe the nice Heegaard diagram and compute the time required to obtain the diagram.

A {\it braid word} $w$ for a braid $\delta \in B_n$ is a string $\sigma_{i_1}^{s_1} \sigma_{i_2}^{s_2} \ldots \sigma_{i_k}^{s_{k}}$, where $\sigma_{i_j} \in \{ \sigma_1,\dots,\sigma_{n-1} \}$ and $s_{j} \in \{\pm 1\}$, of Artin letters whose product equals $\delta$ in $B_n$.  The integer $k = l(w)$ is called the {\it length} of the word $w$.

Throughout this section, the algorithm depends upon the explicit word $w$ chosen for a braid $\delta$, although the final Heegaard diagram does not.  For this reason, we will use $\SH_w$ to denote Heegaard diagram.

In addition to the setup and choice of bases $\as, \mathbf{b}$ from Subsection \ref{sub:braid_diagrams}, we choose specific spanning arcs $\gamma_i$ for $i = 0,\dots,n$.  Recall that to identify $B_n$ and $\cM(D,n)$, we chose spanning arcs $\gamma_i = [i,i+1]$ connecting the basepoints. We can assume that the boundary $\partial D$ passes through the points $(0,0)$ and $(n+1,0)$, and let $\gamma_0 = [0,1]$ and $\gamma_n = [n,n+1]$ be arcs that connect the $1^{\text{st}}$ and $(n)^{\text{th}}$ basepoints to the boundary of the disk.  Furthermore, we can choose $\as$ such that $\alpha_i \cap \gamma_j$ is empty if $i \neq j$ and is exactly one point if $i = j$.  Finally, we assume that there are no triple intersection points between $\alpha,\beta,\gamma$ curves.

The spanning arc $\gamma_i$ cuts $a'_i$ into two components, $a_i^+$ and $a_i^-$, which are contained in the upper-half and lower-half planes, respectively, and $a_i$ cuts the spanning arc $\gamma_i$ into two components $\gamma_i^{\pm}$.  Let $A_i^{\pm}$ denote the number of intersections $|a_i^{\pm} \cap \betas|$ between $\beta$ curves and the components of $a'_i \setminus \gamma_i$ and define $A_i = A_i^+ + A_i^-$.  Similarly, let $\Gamma_i^{\pm}$ denote the number of intersections $|\Gamma_i^{\pm} \cap \betas|$ between $\beta$ curves and the components of $\gamma_i \setminus \alpha_i$ and define $\Gamma_i = \Gamma_i^+ + \Gamma_i^-$.  

\begin{definition}
The \textit{complexity} of a multi-pointed Heegaard diagram $\SH$ is a tuple $(n,v)$, where $n-1$ is the number of $\alpha$ curves and $v = |\alphas \cap \betas|$ is the total number of intersections between $\alpha$ and $\beta$ curves.  
\end{definition}

Finally, we need to set a unit of time.  A Heegaard diagram $\SH$ can be represented combinatorially by using $\alphas, \betas, \mathbb{\gamma}, \ws$ to determine a handle decomposition of $\Sigma$.  Specifically, there is a unique 0-handle for each intersection point between some pair of $\alpha,\beta$ or $\gamma$ curves and a unique 0-handle for each $w$-basepoint; a unique 1-handle for each segment of an $\alpha, \beta$ or $\gamma$ curve between intersection points or $w$-basepoint; and a unique 2-handle for each elementary region.  Each intersection point is 4-valent, so introducing or eliminating an intersection point requires a constant number of modifications of the handle decomposition.  Thus, we declare that adding or removing a single intersection point between a pair of $\alpha$ and $\beta$ curves or a pair of $\gamma$ and $\beta$ curves takes $O(1)$ time.

Given a braid word $\sigma_{i_1}^{s_1} \dots \sigma_{i_{l(w)}}^{s_{l(w)}}$ for a braid $\delta$, it is straightforward to build the self-homeomorphism $\phi$ corresponding to $\delta$ by starting from a diagram $\SH_1$ of the identity map and successively applying the half-twists $\tau_{i_j}^{s_j}$.  After each step, we will also immediately remove any trivial bigons formed by pairs of $\alpha$ and $\beta$ curves or pairs of $\gamma$ and $\beta$ curves.

{\bf Base case}.  The mapping class corresponding to the trivial braid $1$  is the identity $id$, so $D'$ is just a second copy of $D$ (see Figure~\ref{figure:top_disk}).  The diagram $\SH_1$ has complexity $(n, 2(n-1))$ and takes $O(n)$ time to construct.  We choose it so that $A_i^+ = \gamma_i^- = 1$ and $A_i^- = \gamma_i^+ = 0$ for all $i = 1,\dots n-1$.

{\bf Inductive step}.  Figure~\ref{figure:Heegaard_Half_Twist} describes the effect of the half-twist $\tau_i$ on the lower-half disk $-D'$ of $\SH_w$.  The half-twist is the identity outside of a neighborhood of the spanning arc $\gamma_i$ and we can assume that in this neighborhood, there are no $\alpha,\beta$ intersections.  For the following discussion, we restrict attention to the positive half-twist as the case of the inverse half-twist is identical up to mirror image.

\begin{figure}[!htbp]
\centering
\begin{subfigure}{.45\textwidth}
	\centering
	\includegraphics[width=.6\linewidth]{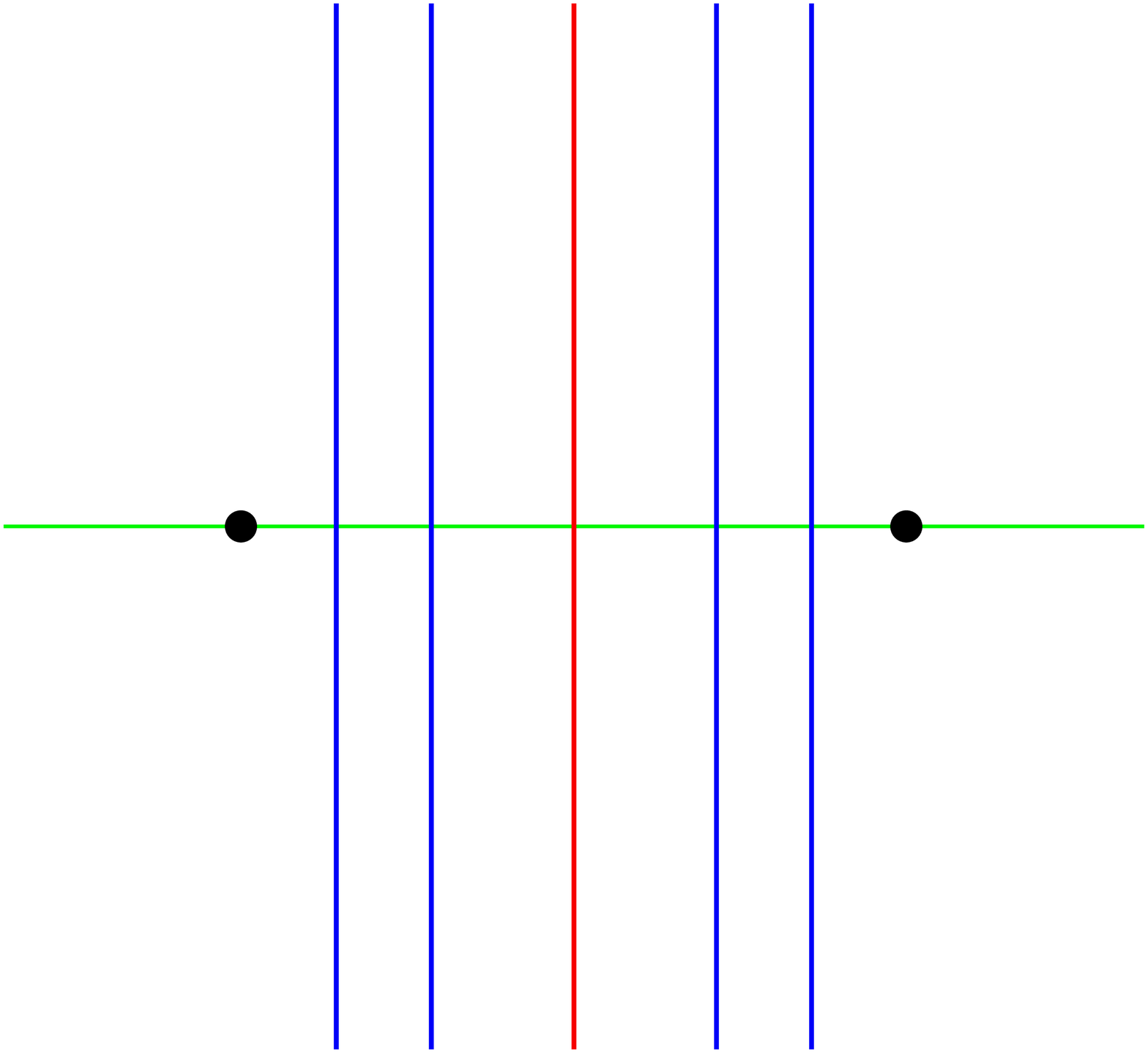}
	\caption{A neighborhood of $\gamma_i$ before the half-twist}
	\label{figure:Heegaard_Pre_Half_Twist}
\end{subfigure}
\begin{subfigure}{.45\textwidth}
	\centering
	\includegraphics[width=.6\linewidth]{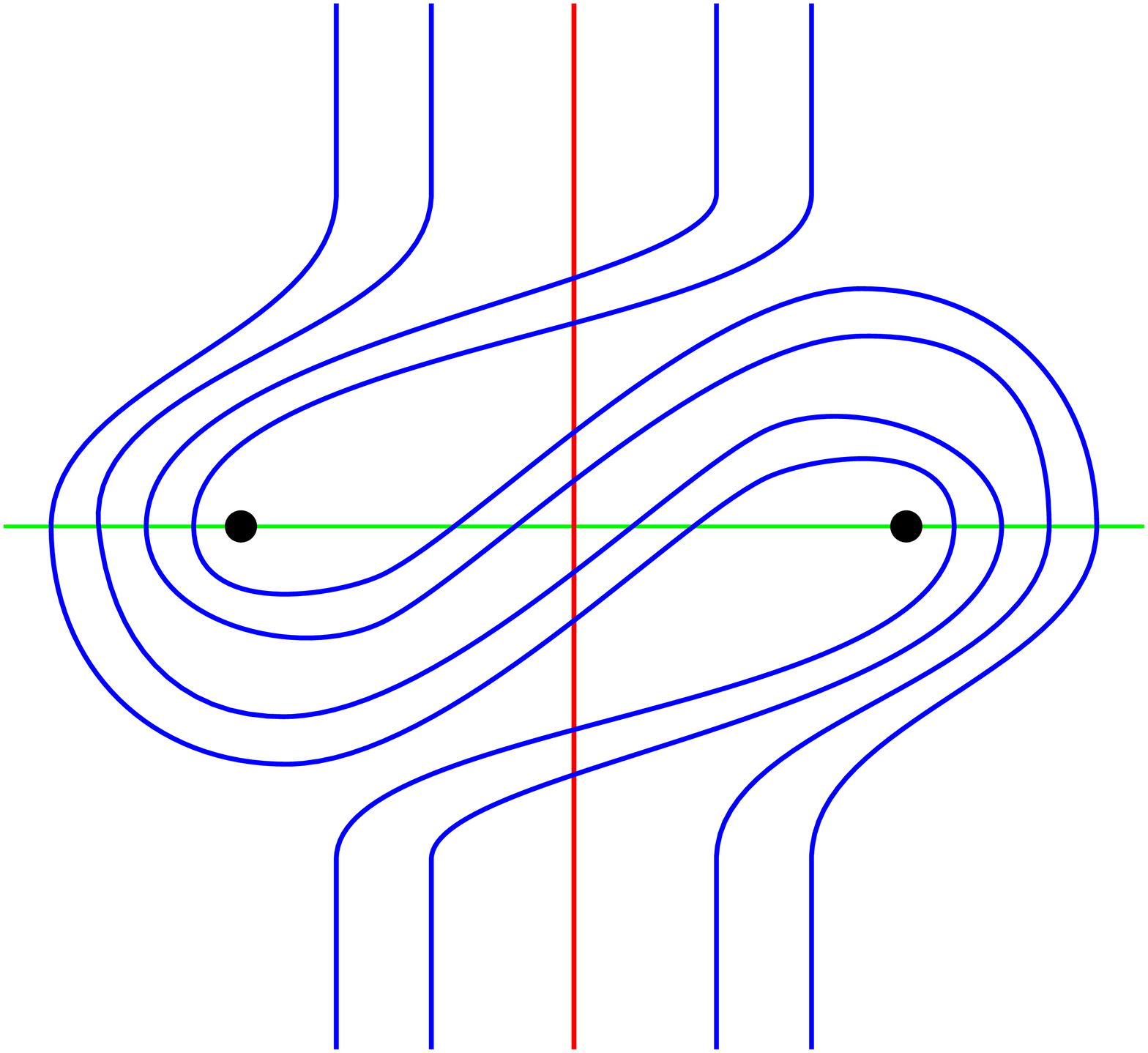}
	\caption{A neighborhood of $\gamma_i$ after the half-twist}
	\label{figure:Heegaard_Post_Half_Twist}
\end{subfigure}
\caption{A half-twist $\gamma_i$}
\label{figure:Heegaard_Half_Twist}
\end{figure}

Consider what happens to a $\beta$ strand that intersects $\gamma_i^-$ under the half-twist.  It now intersects first $\gamma_{i-1}^+$, then $a_i^-$, then $\gamma_i^+$, then $\gamma_{i+1}^-$, then finally $a_i^-$ again before leaving $\nu(\gamma_i)$.  On the other side, a $\beta$ strand that intersected $\gamma_i^+$ now intersects first $a_i^+$, then $\gamma_{i-1}^+$, then $\gamma_i^-$, then $a_i^+$, then finally $\gamma_{i+1}^-$ before leaving $\nu(\gamma_i)$.  

In order to see how new trivial bigons can be created, consider instead the ``dual'' diagram of a half-twist, in which we apply $\tau_i^{-1}$ to the $\alpha,\gamma$ curves instead in Figure~\ref{figure:new_bigons}.  New trivial bigons appear if prior to applying the half-twist, a $\beta$ strand, after it leaves $\nu(\gamma_i)$, continues counterclockwise and intersects the next $\alpha$ and/or $\gamma$ segments.

\begin{figure}[!htbp]
\centering
\includegraphics[width=.3\textwidth]{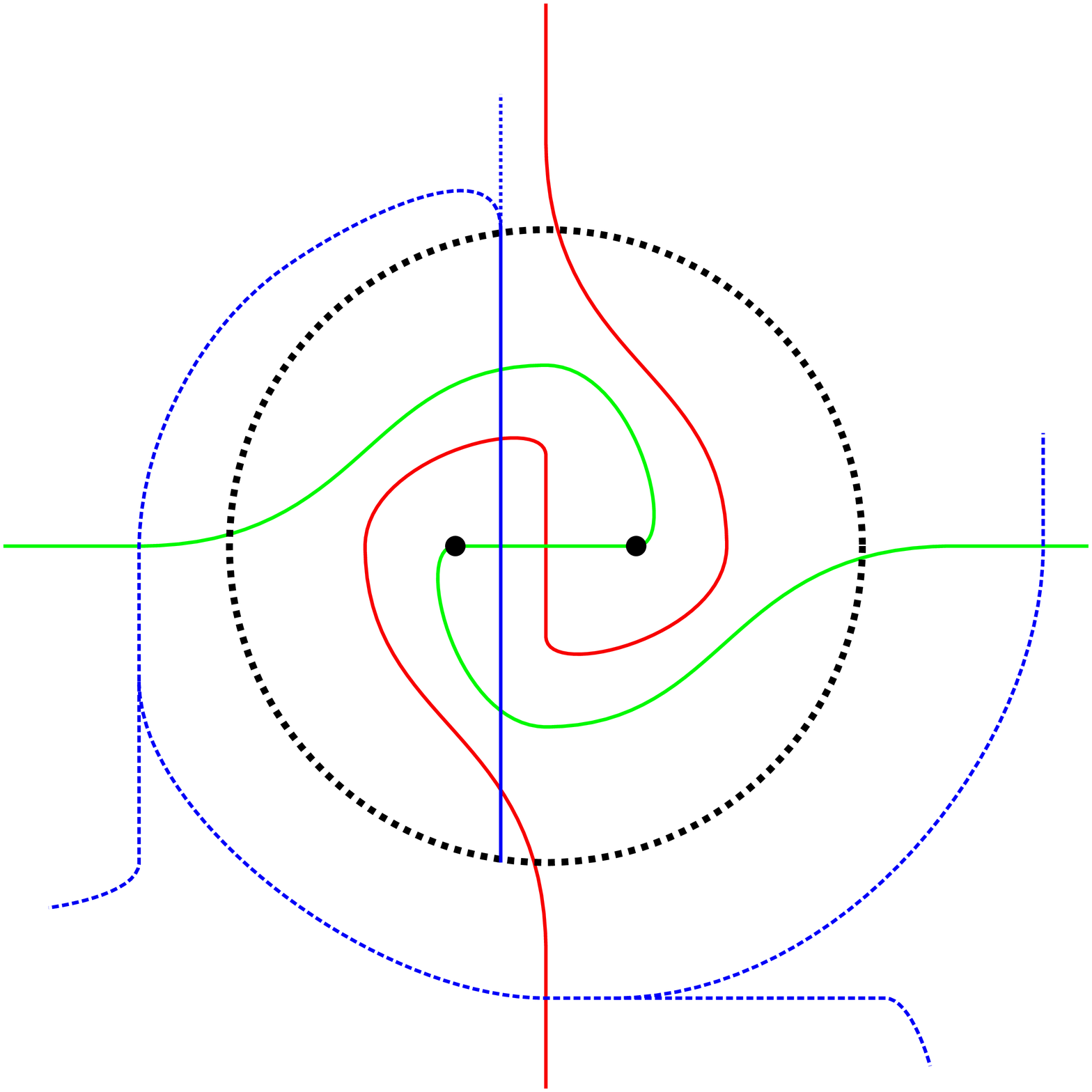}
\caption{The possible trajectories of a $\beta$ arc as it leaves $\nu(\gamma_i)$}
\label{figure:new_bigons}
\end{figure}

For example, a $\beta$ curve leaving $\nu(\gamma_i)$ at the top-left, between $\gamma_{i-1}$ and $a_i^+$, can next hit (in counter-clockwise order) $a_i^+, \del D', a_{i-1}^+, \gamma_{i-1}^+$ and if it crosses the spanning arc $\gamma_{i-1}$, it can next hit $a_{i-1}^-, \del D', a_i^-$.  (We can assume, by choosing $\nu$ small, that this $\beta$ segment does not enter $\nu(\gamma_i)$ again).

For each original $\beta$ arc in $\nu(\gamma_i)$, the half-twist $\tau_{\gamma_i}$ can introduce at most two trivial bigons, one with $\alpha_i$ and one with $\gamma_{i \pm 1}$, in each direction it leaves $\nu(\gamma_i)$ for a total of at most four new trivial bigons.

For $w = w' * \sigma_i^*$, let $\SH_{w' \sigma_i^*}$ denote the Heegaard diagram obtained by applying the half-twist $\tau_{\gamma_i}^*$ and then removing any trivial bigons.

We can now describe the complexity of the diagram and the time to obtain the diagram.

\begin{lemma}
\label{lemma:complexity_runtime}
Suppose that the complexity of $\SH_w$ is $(n,v)$.  Then the complexity of $\SH_{w \sigma_i^*}$ is at most $(n,v + 12( \Gamma_i))$ and it can be obtained from $\SH_w$ in $O(|\Gamma_i|)$.  The complexity of $\SH_w$ is at most $(n,4^{l(w)}+2n))$ and it can be obtained in $O(4^{l(w)} + n)$.
\end{lemma}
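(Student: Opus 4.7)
The plan is to prove Part 1 by a localized analysis of the half-twist $\tau_{\gamma_i}^{s}$ near $\gamma_i$, and then deduce Part 2 by induction on $l(w)$.

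For Part 1, I exploit the fact that the half-twist is supported in $\nu(\gamma_i)$, so all modifications to the diagram occur inside $\nu(\gamma_i)$ or in a thin collar just outside. Using the explicit enumeration preceding the lemma, each of the $\Gamma_i$ original $\beta$--intersections with $\gamma_i^\pm$ is replaced inside $\nu(\gamma_i)$ by exactly 5 intersections: 2 with $a_i^\mp$ (contributing to $v$) and 3 with the neighboring spanning arcs $\gamma_{i-1}^+,\gamma_i^\mp,\gamma_{i+1}^-$ (contributing to $\gamma$--$\beta$ counts). Just outside $\nu(\gamma_i)$, each twisted $\beta$--strand can create up to four new trivial bigons with nearby $\alpha$ and $\gamma$ segments, as enumerated in the text; I detect and remove each such bigon in $O(1)$ time, modifying only a constant-sized portion of the handle decomposition per operation. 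Summing over all $\Gamma_i$ original crossings yields the crude bound $v + 12\Gamma_i$ on the new $\alpha$--$\beta$ count, and since each original $\gamma_i$--crossing requires only a constant amount of combinatorial work, the total time is $O(\Gamma_i)$.

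For Part 2, I induct on $l(w)$. The base case $\SH_1$ corresponds to the identity mapping class and is assembled directly from Figure~\ref{figure:top_disk} in $O(n)$ time, with complexity $(n, 2(n-1))$ and $\Gamma_i = 1$ for every $i$. For the inductive step I run a parallel induction controlling how $\Gamma_{i_{k+1}}$ grows with $k$: the same local analysis from Part 1 shows that a single half-twist at index $i$ increases $\Gamma_{i\pm 1}$ by at most a bounded multiple of $\Gamma_i^{(k)}$ and leaves $\Gamma_j$ for $|j-i|>1$ unchanged, so the maximum of the $\Gamma_j^{(k)}$'s grows geometrically with ratio at most $4$. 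Feeding this back into Part 1 and summing the contributions $12\Gamma_{i_k}$ across $k=1,\dots,l(w)$ as a geometric series closes both bounds: $v_{l(w)} \leq v_0 + O(4^{l(w)}) \leq 4^{l(w)} + 2n$, and the cumulative time is $O(n) + \sum_k O(\Gamma_{i_k}) = O(4^{l(w)} + n)$.

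The main obstacle is pinning down the constants carefully enough to obtain the precise bound $4^{l(w)} + 2n$ rather than a larger multiple of $4^{l(w)}$. The interaction between the factor $12$ in Part 1 and the growth rate of the $\Gamma_j$'s must be amortized so that the telescoping series contributes exactly $4^{l(w)}$ (with all remaining slack absorbed into the $O(n)$ base-case term). Trivial-bigon removal is essential here: without it, extraneous $\gamma$--$\beta$ intersections would compound through successive half-twists, pushing the geometric growth rate above $4$ and breaking the desired bound. Accordingly, the main technical work of the proof is a careful accounting showing that bigon removal and the $\gamma$-crossing redistribution together keep $\max_j \Gamma_j^{(k)}$ bounded by roughly $4^{k-1}$, which is exactly what is needed for the inductive estimate $v_{k+1} \leq v_k + 3\cdot 4^k$ to telescope to the stated bound.
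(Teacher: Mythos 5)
Your proposal is correct and follows essentially the same route as the paper: a local count showing each half-twist introduces or eliminates at most $12\Gamma_i$ vertices (including the at most $4\Gamma_i$ trivial bigons, removed at $O(1)$ cost each), followed by the observation that only $\Gamma_{i\pm 1}$ grow, so $\max_j \Gamma_j$ grows geometrically and the per-step contributions sum as a geometric series on top of the $O(n)$ base case. The only deviation is bookkeeping: the paper notes that each $\Gamma_{i\pm1}$ increases by exactly $\Gamma_i \le \max_j\Gamma_j$, so the maximum at most \emph{doubles} per letter, whereas your ratio-$4$ estimate is looser and shifts the constants in the telescoping sum slightly, but this does not affect the stated $O(4^{l(w)}+n)$ bounds.
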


\begin{proof}
It's clear that performing this half twist introduces $2\Gamma_i$ new intersections between $\alpha_i$ and the $\beta$ curves.  Moreover, it introduces $\Gamma_i$ new intersections between $\gamma_{i-1}^+, \gamma_{i+1}^-$ and the $\beta$ curves.  Removing the trivial bigons requires eliminating 2 vertices for at most $4 \Gamma_i$ new trivial bigons.

The first pair of facts follow since at most $12 \Gamma_i$ new vertices are introduced or eliminated, each of which takes $O(1)$ times.  The second pair follows since at each step, $\Gamma_{i-1},\Gamma_{i+1}$ each increase by $\Gamma_i < \text{max}_j \Gamma_j$.  The maximum among the $\Gamma_j$ can at most double each iteration.
\end{proof}

Finally, we need to make the diagram nice.

\begin{proposition}
\label{prop:diagram_complexity}
The nice diagram $\SH_w^{nice}$ has complexity at most $(2n-2, 4^{l(w)+1} + 8n)$ and can be obtained in $O(4^{l(w)})$.
\end{proposition}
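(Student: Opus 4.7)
My plan is to start from the diagram $\SH_w$ produced via Lemma~\ref{lemma:complexity_runtime}, whose complexity is $(n, 4^{l(w)} + 2n)$ and whose construction takes $O(4^{l(w)} + n)$ time, and then apply the stabilization trick of Subsection~\ref{sec:nice_diagram} to each elementary $6$-gon. By Remark~\ref{rem:max_badness}, there are at most $n-2$ such $6$-gons in $\SH_w$, so at most $n-2$ stabilizations are required. Each stabilization attaches exactly one new $\alpha$-curve $\widehat{\alpha_i}$ and one new $\beta$-curve $\widehat{\beta_i}$, bringing the total $\alpha$-curve count to at most $(n-1) + (n-2) = 2n-3$ and giving the first coordinate $2n-2$ of the complexity.

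Next, I would bound the growth of the total intersection count $v$. Each stabilization introduces two families of new intersections. First, intersections between $\widehat{\alpha_i}$---a partial pushoff of $\alpha_i$ or $\alpha_{i-1}$, closed up through the new $1$-handle and chosen to avoid the bigon around $w_i$---and the existing $\beta$-curves, which are at most $A_i^+ + A_i^-$. Second, intersections between $\widehat{\beta_i}$ and the existing $\alpha$-curves, produced as the finger moves push $\widehat{\beta_i}$ across $\alpha$-edges until a $z$-basepointed region is reached. Since every elementary region traversed by these finger moves is either a bigon or a square (Lemma~\ref{lemma:low_badness}), each traversed square contributes exactly one new crossing while each bigon contributes none. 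Summing the per-stabilization contributions over the at most $n-2$ stabilizations---and using that distinct stabilizations act on essentially disjoint portions of the diagram---I expect to arrive at the global bound $v \leq 4 \cdot (4^{l(w)} + 2n) = 4^{l(w)+1} + 8n$, matching the second coordinate.

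The main obstacle will be the second family above: certifying that the finger moves do not multiply the intersection count beyond the stated factor of four. My approach would be to show that each successive traversal of a square by $\widehat{\beta_i}$ introduces a single new intersection with an existing $\alpha$-curve, and to bound the total number of squares traversed across all stabilizations by at most three times the existing intersection count inside the relevant $A_i$'s. Careful bookkeeping along the finger-move paths should yield the desired blow-up factor.

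Finally, the runtime bound follows by totaling elementary operations. Each insertion or removal of an intersection point costs $O(1)$ by the conventions of Subsection~\ref{subsec:Algorithm}, so the stabilization phase runs in time proportional to the number of new intersections created, namely $O(4^{l(w)})$. Combined with the $O(4^{l(w)} + n)$ cost from Lemma~\ref{lemma:complexity_runtime}, this yields the claimed overall runtime, with the additive $n$ absorbed into the dominant term.
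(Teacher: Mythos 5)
Your plan follows the paper's proof: start from Lemma~\ref{lemma:complexity_runtime}, apply the stabilization trick to each of the at most $n-2$ hexagons (which correctly gives the first coordinate $2n-2$), argue that the stabilizations at most quadruple the number of intersection points, and convert intersection changes to runtime at $O(1)$ apiece. The gap is precisely in the step you flag and defer: the count of intersections created by the finger moves, which is where the factor $4$ actually lives. As sketched, your bookkeeping would not close. First, when a finger of $\widehat{\beta_i}$ is pushed across an $\alpha$ edge, the curve $\widehat{\beta_i}$ acquires \emph{two} new intersection points with that $\alpha$ curve (one on each side of the tongue), so ``exactly one new crossing per traversed square'' undercounts by a factor of two. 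Second, you propose to measure the traversed squares against intersections ``inside the relevant $A_i$'s'' and to invoke disjointness of distinct stabilizations; but the fingers run through the \emph{dual} regions $B_j$ and $B_{j-1}$ (Figure~\ref{figure:stabilizedB}), not through $A_i$, and the fingers coming from the hexagons lying in $B_j$ and in $B_{j+1}$ can both pass through $B_j$, so distinct stabilizations do not act on disjoint portions of the diagram. With a first family of size up to $\sum_i A_i$ (already comparable to the whole intersection count) plus a second family allowed to be three times as large again, your totals land above the stated bound rather than at it.

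The paper closes this step with a different charging scheme. For the stabilization at the hexagon contained in $A_i$ (and lying in some $B_j$), the pushoff $\widehat{\alpha_i}$ meets the $\beta$ curves in at most $A_i$ points, while the finger moves give $\widehat{\beta_i}$ at most $B_j + B_{j-1}$ intersections with the $\alpha$ curves, since the fingers can only travel through those two $\beta$--regions before hitting a basepointed region. Because each original intersection point is associated to a single $A_i$ and a single $B_j$, and each $B_j$ contains at most one hexagon, every original vertex is charged at most once on the $\alpha$ side and at most twice on the $\beta$ side: at most three new intersection points per original one. Hence the stabilized diagram has at most $4\bigl(4^{l(w)}+2n\bigr) = 4^{l(w)+1}+8n$ intersections, and since the number of intersection changes is a constant multiple of the old count, the $O(4^{l(w)})$ runtime follows exactly as in your final paragraph. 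If you replace your $A_i$-based, disjointness-based estimate with this $B_j + B_{j-1}$ charge (and the two-points-per-crossing count), your argument becomes the paper's.
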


\begin{proof}
Attaching a handle in the $i^{\text{th}}$ annulus introduces at most $A_i$ intersections between $\widehat{\alpha_i}$ and the $\beta$ curves and introduces at most $B_j + B_{j-1}$ intersections between $\widehat{\beta_j}$ and the $\alpha$ curves.  In other words, we introduce at most 3 new intersection points for each original intersection point.  Since the new number of intersection points is a constant multiple of the old number of intersection points, the time complexity is the same as in Lemma~\ref{lemma:complexity_runtime}.
\end{proof}

\begin{remark}
This bound on diagram complexity may overestimate the number of new intersections of $\beta$ curves and spanning arc.  It should be possible to choose a lower base in the exponential bound.  However, the number of vertices can grow exponentially in the braid word length.  For instance, the complexity of the diagram for the pseudo-Anosov braid $(\sigma_2^{-1} \sigma_1)^k$ grows exponentially in $k$.
\end{remark}


\section{Computing homology} 
\label{sec:homology}

In this section, we will describe how to compute knot Floer homology from a nice, multi-pointed Heegaard diagram.  The approach here is fairly straightforward and there are many techniques to improve the speed of computation.  However, we are only interested in establishing the basic qualitative properties of our approach.  The discussion draws from \cite{Hales} but is adapted to our particular Heegaard diagram.

Throughout this section, we consider Heegaard diagrams $\SH_w^{nice}$ of complexity $(n+1,v)$.

First, we establish a bound on the size of the chain complex.  Recall that the knot Floer complexes are generated by the intersection points $\TT_{\alpha} \cap \TT_{\beta}$.

\begin{lemma}
\label{lemma:generator_bound}
Let $\SH$ be a Heegaard diagram of complexity $(n+1,v)$.  Then the number of generators of the knot Floer complex is at most $\left(\frac{v}{n}\right)^n$.
\end{lemma}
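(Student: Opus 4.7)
The plan is to bound the number of generators by a simple unconstrained count and then apply AM-GM. First I would unpack the complexity convention: since $\SH$ has complexity $(n+1,v)$, there are $n$ $\alpha$--curves $\alpha_1,\dots,\alpha_n$, $n$ $\beta$--curves, and a total of $|\alphas\cap\betas| = v$ intersection points between the two families. A generator of $\CFKm(\SH)$ is a point of $\TT_\alpha\cap\TT_\beta$, that is, an unordered $n$--tuple $\{x_1,\dots,x_n\}$ of transverse intersection points such that the $x_i$ use each $\alpha$--curve exactly once and each $\beta$--curve exactly once.

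The key observation is that the $\beta$--matching condition only reduces the count, so one obtains an over-estimate by forgetting it: the set $\TT_\alpha\cap\TT_\beta$ injects into the set of $n$--tuples $(x_1,\dots,x_n)$ where each $x_i$ is an arbitrary point of $\alpha_i\cap\betas$ chosen independently. Writing $a_i = |\alpha_i\cap\betas|$, this over-estimate equals $\prod_{i=1}^n a_i$, and counting intersections by which $\alpha$--curve they lie on gives
\[
\sum_{i=1}^n a_i \;=\; |\alphas\cap\betas| \;=\; v.
\]

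To finish, I would invoke the arithmetic-geometric mean inequality on the nonnegative reals $a_1,\dots,a_n$:
\[
\prod_{i=1}^n a_i \;\leq\; \left(\frac{1}{n}\sum_{i=1}^n a_i\right)^{\! n} \;=\; \left(\frac{v}{n}\right)^{\! n},
\]
which yields the claimed bound. There is essentially no obstacle here; the only thing to be careful about is the off-by-one arising from the complexity convention (the parameter ``$n+1$'' corresponds to $n$ $\alpha$--curves), since otherwise one would write the bound with the wrong exponent.
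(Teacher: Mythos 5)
Your proof is correct. The paper phrases the same count through the permanent: the number of generators equals $\mathrm{per}(M)$, where $M_{ij}=|\alpha_i\cap\beta_j|$, and then asserts (as an ``easy exercise'') that on the level set $\sum_{i,j}M_{ij}=v$ the permanent is maximized at $\frac{v}{n}\,\mathrm{Id}_n$, giving the bound $\left(\frac{v}{n}\right)^n$. Your route --- dropping the condition that the $\beta$--curves be used injectively, so that the generators inject into tuples counted by $\prod_i a_i$ with $a_i=|\alpha_i\cap\betas|$, and then applying AM--GM to $\sum_i a_i=v$ --- is exactly the standard bound $\mathrm{per}(M)\le\prod_i(\text{row sums})\le\left(\frac{v}{n}\right)^n$, i.e.\ a complete elementary proof of the step the paper leaves unproved. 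So the two arguments rest on the same counting via the intersection matrix; what yours buys is an explicit justification of the extremal claim (and you correctly avoid needing the paper's additional, unused statement about the minimizer $\frac{v}{n^2}[1]$). Your handling of the convention that complexity $(n+1,v)$ means $n$ $\alpha$--curves, and the observation that disjointness of the $\alpha$--curves makes the forgetful map injective, are both right.
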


\begin{proof}
The number of generators is the permanent of the matrix $M$ whose $(i,j)^{\text{th}}$ entry is the number of intersection points between $\alpha_i$ and $\beta_j$.  It is an easy exercise to prove that, restricted to the level set $\sum_{i,j} M_{i,j} = v$,  the maximum of the permament function is $\frac{v}{n} * \text{Id}_n$ and the minimum is $\frac{v}{n^2} * [1]$, where $\text{Id}_n$ is the $n \times n$ identity matrix and $[1]$ is the $n \times n$ matrix with 1's in every entry.
\end{proof}

In order to compute the Alexander and Maslov gradings of the generators and to identify the holomorphic disks in the differential, we need to find chains in the Heegaard diagram connecting pairs of generators.  To accomplish this, we will use a linear map associated to the Heegaard diagram as follows.

Let $R$ denote the $\mathbb{R}$--vector space generated by elementary regions $r \in \Sigma \setminus (\alphas \cup \betas)$.  Let $C$ denote the $\mathbb{R}$--vector space generated by the $\alpha$ curves $\{\alpha_1,\dots, \alpha_{n-1+g}\}$ and all but one of the $\beta$ curves $\{\beta_1, \dots, \beta_{n-2+g}\}$.  Let $V$ denote the $\mathbb{R}$--vector space generated by the vertices of the Heegaard diagram. These spaces each have an inner product $\langle \cdot , \cdot \rangle$ defined by choosing the above bases to be orthonormal.

 Let $\rho = \text{dim } R$ be the number of regions, $v = \text{dim } V$ the number of vertices, and $e$ the number of edges in the Heegaard diagram.  Then 
\[2 - 2g = \rho - e + v\]
and since each vertex is 4--valent, the identity $2e = 4v$ implies that
\begin{equation}
\label{eq:Euler}
4 - 4g = 2 \rho -3v
\end{equation}

Let $\mathcal{D}$ be the linear map from $R \oplus C \rightarrow V$ that assigns to each region $r$ the signed sum of vertices incident to $r$ and to each $\alpha$ or $\beta$ curve to the sum of vertices along it.  The sign of a vertex $v$ is positive if, traveling along the boundary of $r$ in the direction induced by the orientation, one jumps from a $\beta$ curve to an $\alpha$ curve at $v$, and the sign is negative otherwise.  

By abuse of notation, let $A_i$ and $B_i$ denote the sum in $R$ of elementary regions constituting the connected components $A_i$ of $\Sigma \setminus \alphas$ and $B_i$ of $\Sigma \setminus \betas$, respectively, containing the basepoint $z_i$.  

A domain in $\Sigma$ is \textit{periodic} if its boundary is the union $\coprod_{i} k_i \alpha_i  \coprod_i l_i \beta_i$ of some number of $\alpha$ and $\beta$ curves.  The domains $A_i,B_j$ and sums of these domains are periodic.

\begin{lemma}
\label{lemma:chain_map_surjective}
$\mathcal{D}$ is surjective and the kernel of $\mathcal{D}$ is spanned by $A_1, \dots, A_n$ and $B_1, \dots, B_{n-1}$.
\end{lemma}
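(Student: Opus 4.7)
The plan is to verify both the surjectivity and the kernel identification via a dimension count after exhibiting $2n-1$ explicit kernel elements and establishing surjectivity.

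First I would show that each $A_i$ and $B_j$ lies in $\ker \mathcal{D}$ by a direct local computation. Writing $A_i$ as a sum of its elementary sub-regions, the signed vertex sum $\mathcal{D}(A_i)$ can be evaluated vertex-by-vertex. At any $v \in \partial A_i$ (necessarily on one of $\alpha_{i-1}$ or $\alpha_i$), exactly two of the four elementary regions meeting at $v$ lie in $A_i$, both sitting on the same side of the bounding $\alpha$-curve. The sign convention---$+$ when $\partial r$ transitions $\beta \to \alpha$ at $v$ and $-$ otherwise---assigns these two regions opposite signs, so their contributions to $\mathcal{D}(A_i)$ cancel. Thus $\mathcal{D}(A_i) = 0$, and the same argument gives $\mathcal{D}(B_j) = 0$.

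Next I would check that $\{A_1,\dots,A_n,B_1,\dots,B_{n-1}\}$ are linearly independent in $R$. Reading off the coefficient of each elementary region $r \in A_i \cap B_j$ in a relation $\sum a_i A_i + \sum_{j<n} b_j B_j = 0$ forces $a_i + b_j \cdot [j<n] = 0$ at every nonempty overlap. Connectedness of the bipartite graph on $\{A_i\} \sqcup \{B_j\}$ with edges for nonempty overlap---which holds for a Heegaard diagram of $S^3$---yields $a_i = a$ and $b_j = -a$ for a single scalar $a$. Since $B_n$ is omitted, the relation collapses to $a \cdot B_n = 0$, forcing $a = 0$. Combining this with the Euler identity $\rho - v = 2-2g$ (the content of~\eqref{eq:Euler}) and $\dim C = (n-1+g)+(n-2+g) = 2n-3+2g$, I get
\[
\dim(R \oplus C) - \dim V \;=\; (\rho - v) + (2n-3+2g) \;=\; 2n-1,
\]
so proving surjectivity will force $\dim \ker \mathcal{D} = 2n-1$, and the preceding $2n-1$ independent kernel elements must then span $\ker \mathcal{D}$.

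The hard part is therefore surjectivity, which is the main obstacle. My plan is to prove instead that the transpose $\mathcal{D}^T \colon V \to R \oplus C$ is injective. A function $c \colon V \to \mathbb{R}$ lies in $\ker \mathcal{D}^T$ precisely when (a) $\sum_{v \in \partial r} \sigma(v,r) c_v = 0$ for every elementary region $r$, (b) $\sum_{v \in \alpha_i} c_v = 0$ for every $\alpha$-curve, and (c) $\sum_{v \in \beta_j} c_v = 0$ for every $\beta$-curve except the omitted one. Because $\SH_w^{\mathrm{nice}}$ is nice, (a) on each bigon forces $c$ to agree on its two vertices, and (a) on each square forces the diagonal identity $c_{v_1}+c_{v_3} = c_{v_2}+c_{v_4}$. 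Chasing these relations across the connected diagram reduces $c$ to a small family of constants supported along each $\alpha$- and $\beta$-curve, which the vanishing-sum conditions (b) and (c) then eliminate; the role of omitting exactly one $\beta$-curve from $C$ is to leave precisely the right residual freedom for the remaining conditions to force $c \equiv 0$. If this combinatorial propagation proves delicate, the backup plan is to identify $\ker \mathcal{D}\vert_R$ with the space of periodic domains of $\SH_w^{\mathrm{nice}}$---a standard Heegaard Floer computation giving dimension $2n-1$ for an $S^3$-diagram---and then to verify separately that the unsigned vertex sums $\mathcal{D}(\alpha_i)$ and $\mathcal{D}(\beta_j)$ do not lie in $\mathrm{image}(\mathcal{D}\vert_R)$, so that no kernel element of $\mathcal{D}$ carries a nontrivial $C$-component.
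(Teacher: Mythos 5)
Your decomposition of the problem is reasonable, and the pieces you actually carry out are correct: the local sign cancellation showing $\mathcal{D}(A_i)=\mathcal{D}(B_j)=0$, the independence of these $2n-1$ domains, and the count $\dim(R\oplus C)-\dim V=(\rho-v)+(2n-3+2g)=2n-1$. The genuine gap is the step you yourself flag as the hard part: surjectivity of $\mathcal{D}$ is never proved, only sketched, and the sketch cannot succeed with the ingredients it invokes. By your own dimension count, injectivity of $\mathcal{D}^T$ is equivalent to the kernel of $\mathcal{D}$ being exactly $(2n-1)$--dimensional, i.e.\ to the periodic domains being no larger than the span of the $A_i$ and $B_j$; that is a statement about the diagram presenting $S^3$, not about niceness or connectivity. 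A nice, connected diagram of a $3$--manifold with $b_1>0$ satisfies every relation you list in (a)--(c) --- bigon equalities, square diagonal identities, vanishing sums along all $\alpha$--curves and all but one $\beta$--curve --- yet there $\mathcal{D}$ is \emph{not} surjective, because extra periodic domains inflate the kernel beyond $2n-1$. So ``chasing the relations across the connected diagram'' cannot force $c\equiv 0$ unless the topology of $S^3$ enters somewhere, and your propagation never uses it.

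Your backup plan is essentially the paper's proof, which runs the logic in the opposite order from your primary route: it computes the kernel first (the kernel of $\mathcal{D}$ restricted to $R$ is the space of periodic domains, which for this $S^3$ diagram is generated by the $A_i,B_j$ subject to the single relation $B_n=\sum_{i}A_i-\sum_{j<n}B_j$, hence has dimension $2n-1$) and then deduces surjectivity from the Euler--characteristic count. But as stated your backup check is too weak: verifying that $\mathcal{D}(\alpha_i)$ and $\mathcal{D}(\beta_j)$ individually avoid $\mathcal{D}(R)$ does not rule out a kernel element whose $C$--component is a nontrivial linear combination; you need $\mathcal{D}(C)\cap\mathcal{D}(R)=0$ together with injectivity of $\mathcal{D}|_C$. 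The paper disposes of this in one line: $\mathcal{D}(R)$ is orthogonal to $\mathcal{D}(C)$ in the chosen inner product, because the signed corners of any region at vertices on a fixed $\alpha$ or $\beta$ curve are the endpoints of its edges along that curve and cancel in $\pm$ pairs. Replacing your propagation sketch by this orthogonality observation plus the periodic-domain identification closes the argument and brings it in line with the paper's proof.
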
 

\begin{proof}

The statement about the kernel follows from two facts.  First, $\mathcal{D}|_C$ is clearly injective and the image $\mathcal{D}(R)$ is perpendicular to the image $\mathcal{D}(C)$ with respect to the inner product since the signs of corners of a region $r$ formed by some $\alpha$ or $\beta$ cancel in pairs.  Therefore, the kernel of $\mathcal{D}$ lies in $R$.  

Secondly, the kernel of $\mathcal{D}$ in $R$ exactly corresponds to periodic domains.  It follows that since the ambient manifold is $S^3$, the periodic domains are generated by the domains $A_i,B_j$ subject to the relation $B_n = \sum_{i=1}^n A_i - \sum_{j=1}^{n-1} B_j$.

Since the nullity of $\mathcal{D}$ is $2n-1$ and the identity $4g -4 = \rho -3v$ in Equation \ref{eq:Euler} holds, this implies that $\mathcal{D}$ is surjective.
\end{proof}

 Let $\mathcal{W}$ denote the left inverse of $\mathcal{D}$.  Then $\mathcal{W}$ maps $V$ bijectively onto $\text{ker}(\mathcal{D})^{\perp}$ and $\mathcal{WD}$ is the identity on $\text{ker}(\mathcal{D})^{\perp}$.

Each generator $\x$ of the knot Floer chain complex can be identified with a vector $x = x_1 + \dots + x_{n-1+g}$ in $V$.  For a pair of generators, the map $\mathcal{W}$ thus determines a fractional chain on the surface of the Heegaard diagram connecting the two generators.

Since $\mathcal{D}$ is not injective, there is some ambiguity to recovering Whitney disks using the map $\mathcal{W}$.  However, it follows from the proof of Lemma~\ref{lemma:chain_map_surjective} that the only ambiguity is if $\x,\y$ contain distinct vertices $x',y'$ in the intersection of the same pair $\alpha_i,\beta_j$ of curves.  Then the chain $P_{i,j} = \sum_{k \leq i} A_k - \sum_{k \leq j} B_k$ is the difference of two distinct chains connecting $x'$ to $y'$.  The inverse map $\mathcal{W}$ will count each of these chains with coefficient $\frac{1}{2}$ and therefore the coefficients in $\mathcal{W}(x-y)$ lie in $\frac{1}{2} \ZZ$.  To remove this ambiguity, we can consider the chains $\mathcal{W}(x-y) \pm \frac{1}{2} P_{i,j}$.  

If the $C$ components of $\mathcal{W}(x-y)$ are 0 and the $R$ components are integers, then $\phi = \mathcal{W}(x-y)$ is a 2--chain and represents a Whitney disk in $\pi_2(\x,\y)$.  In particular, if the $R$ components of the vector $\mathcal{W}(x-y)$ are either 1 or 0 and the $C$ components are 0, then there is an embedded Whitney disk connecting $\x$ to $\y$. If we correct $\mathcal{W}(x-y)$ by adding the appropiate periodic domains $\pm \frac{1}{2} P_{i,j}$ for all such pairs $x',y'$, the collection gives exactly the Whitney disks connecting $\x$ to $\y$.  Let $\pi_2^0(\x,\y)$ denote the collection of all such domains and note that there are at most $2n$ such domains.

\textbf{Alexander grading}.  The relative Alexander grading of a pair $\x,\y$ is determined by the formula:
\[A(\x) - A(\y) = n_z(\phi) - n_w(\phi)\]
Let $Z,W$ be the vectors in $R$ that are the sums of the basis vectors corresponding to regions with a $z$ or $w$ basepoint, respectively.  Then, the relative Alexander grading is given by
\[A(\x) - A(\y) = \langle \phi ,Z - W \rangle\]
for any $\phi \in \pi_2^0(\x,\y)$.

\textbf{Maslov grading}.  The relative Maslov grading of a pair $\x,\y$ is determined by the formula
\[M(\x) - M(\y) = \mu(\phi) - 2 n_w(\phi)\]
The Maslov index of the Whitney disk $\phi$ can be computed according to Lipshitz's formula \cite{Li}
\[\mu(\phi) = e(\phi) + \mu_{\x}(\phi) + \mu_{\y}(\phi)\]
where $e(\phi)$ is the Euler measure of the domain $\phi$ and $\mu_{\x},\mu_{\y}$ are point measures.  The {\it Euler measure} of a domain $\phi$ is $\frac{1}{2 \pi} \int_{\phi} \omega_g$ where $\omega_g$ is the curvature of a metric $g$ on $\Sigma$ for which all $\alpha,\beta$ curves are geodesics and always intersect at right angles.  The Euler measure is clearly additive and the Euler measure of a $2n$--gon is $1 - \frac{n}{2}$.  The {\it point measure} $\mu_x(\phi)$ of a domain at a vertex $x$ is the average of the coefficients of $\phi$ for the four elementary regions incident to the vertex $x$.  The point measure $\mu_{\x}(\phi) = \sum \mu_{x_i}(\phi)$ with respect to a generator is the sum of point measures of $\phi$ at each vertex $x_i$ of the generator.

Define $E \in R$ to be the vector whose $r^{\text{th}}$ entry is the Euler measure $e(r)$.  Then the relative Maslov grading is given by
\[M(\x) - M(\y) = \langle \phi, E \rangle + \langle |\mathcal{D}|(\phi), x+ y \rangle - 2 \langle \phi, W \rangle \]
for any $\phi \in \pi_2^0(\x,\y)$.

\begin{lemma}
\label{lemma:gradings_complexity}
The relative Alexander and Maslov gradings can be computed from $\SH_w^{nice}$ in $O(v^3 + v^2n + n^2 + (v^2+n)\left(\frac{v}{n}\right)^n)$.
\end{lemma}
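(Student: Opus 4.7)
The plan is to isolate a one-time linear-algebra precomputation from a per-generator loop, so that each of the at most $(v/n)^n$ generators provided by Lemma~\ref{lemma:generator_bound} contributes only quadratic (in $v$) cost.

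In the preprocessing step, I would first write $\mathcal{D}$ explicitly as a matrix with $v$ rows and $\rho + \dim C$ columns, noting that by Equation~(\ref{eq:Euler}) we have $\rho = v + 2 - 2g$ and $\dim C = O(n)$, so the matrix has size $v \times (v + O(n))$ and can be built by traversing the Heegaard diagram in $O(v^2 + vn)$ time. I would then compute a left inverse $\mathcal{W}$ by standard Gaussian elimination, at cost $O(v^3 + v^2 n + n^2)$; alongside this I would tabulate the vectors $Z,W,E \in R$ and the $2n$ periodic chains $P_{i,j}$ that parametrize the ambiguity in recovering a Whitney disk. This preprocessing absorbs the first three summands of the claimed bound.

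In the per-generator loop I would fix a base generator $\x_0$ and, for each other generator $\x$, compute $\mathcal{W}(x - x_0)$ by a sparse matrix-vector multiplication; since $x - x_0$ has $O(n)$ non-zero entries, this costs $O(v^2)$ using the precomputed columns of $\mathcal{W}$. If the resulting $R$-coordinates lie in $\tfrac{1}{2}\ZZ \setminus \ZZ$, I would add the appropriate $\pm \tfrac{1}{2} P_{i,j}$ corrections, at most one per conflicting pair of intersection points and hence $O(n)$ corrections in total, to obtain a genuine 2-chain $\phi \in \pi_2^0(\x,\x_0)$. The Alexander grading is then the scalar product $\langle \phi, Z - W \rangle$ in $O(v)$ time. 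For the Maslov grading I would perform one further matrix-vector multiplication to obtain $|\mathcal{D}|(\phi)$ in $O(v^2)$ time, then compute $\langle |\mathcal{D}|(\phi), x + x_0 \rangle$ in $O(n)$ time using sparsity of $x + x_0$, together with the remaining $O(v)$ pairings $\langle \phi, E \rangle$ and $\langle \phi, W \rangle$. The per-generator cost is thus $O(v^2 + n)$, which when multiplied by $(v/n)^n$ yields the final summand of the bound.

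The main obstacle, and the step I would spend the most care on, is the half-integer correction in the per-generator loop. One must verify that when several pairs of coordinates of $\x$ and $\x_0$ simultaneously differ at distinct $\alpha_i \cap \beta_j$ points, the corresponding periodic corrections $\pm \tfrac{1}{2} P_{i,j}$ can be chosen independently and still produce a valid element of $\pi_2^0(\x,\x_0)$. This should follow from the characterization of $\ker(\mathcal{D})$ in Lemma~\ref{lemma:chain_map_surjective} as the span of the $A_i$ and $B_j$, combined with the observation preceding the lemma that each individual ambiguity is captured by a single $\tfrac{1}{2}\ZZ$ choice. A secondary, purely bookkeeping concern is that the Maslov formula uses $|\mathcal{D}|$, the entrywise absolute value of $\mathcal{D}$, rather than $\mathcal{D}$ itself; this only requires re-using the stored matrix with signs stripped and does not affect the asymptotic bound.
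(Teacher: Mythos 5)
Your proposal follows essentially the same route as the paper: a one-time Gauss--Jordan computation of the pseudoinverse $\mathcal{W}$ accounting for the $O(v^3+v^2n+n^2)$ terms, followed by a per-generator evaluation of $\mathcal{W}(x-y)$ corrected by the half-periodic domains $\pm\tfrac{1}{2}P_{i,j}$, with the grading formulas evaluated via inner products against $Z-W$, $E$, $W$ and $|\mathcal{D}|(\phi)$, giving the $(v^2+n)\left(\frac{v}{n}\right)^n$ term. Your per-step estimates are slightly looser than the paper's (e.g.\ $O(v^2)$ instead of $O(vn+n^2)$ for $\mathcal{W}(x-y)$ and $O(v)$ instead of $O(n)$ for the Alexander pairing), but they still fall within the stated bound, and your concern about simultaneous half-integer corrections is exactly the ambiguity resolved in the paper's discussion of $\pi_2^0(\x,\y)$ preceding the lemma.
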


\begin{proof}
Gauss-Jordan elimination on a $k \times l$ matrix takes $O(k^2 + l^2 k + l^3)$ and so computing the pseudoinverse $\cW$ takes $O(v^3 + v^2n + n^2)$.  Obtaining $\cW(x-y)$ takes $O(vn + n^2)$ and checking all $\cW(x-y) \pm \frac{1}{2} P_{i,j}$ to see if they are domains takes a further $O(vn)$.

To compute the inner product $\langle \phi,Z-W \rangle$ takes $O(n)$ time since $Z-W$ has at most $2n$ nonzero entries.  Thus it takes $O(n \left(\frac{v}{n}\right)^n)$ to compute all Alexander gradings.

Computing $\langle \phi, E \rangle$ takes $O(v)$, computing $|\cD|(\phi)$ via matrix multiplication takes $O(v^2)$ and computing $\langle |\mathcal{D}|(\phi), x+ y \rangle$ and $\langle \phi, W \rangle$ each take $O(n)$.  Thus computing all Maslov gradings takes $O((v^2+n)\left(\frac{v}{n}\right)^n)$.
\end{proof}

\textbf{Differential}.  Given a nice diagram, the Sarkar-Wang result \cite{SW} guarantees a unique holomorphic representative for each empty, embedded square or bigon on the Heegaard diagram connecting two generators.  Moreover, these are the only possible holomorphic disks and so the differential can be computed in terms of these squares or bigons.

Thus, to compute the differential we only need to consider pairs of generators $\x,\y$ that differ in at most two vertices.  Specifically,
\begin{align*}
\x = (x_1,x_2, x_3,\dots,x_{n-1+g}) \qquad \text{and} \qquad \y &= (y_1, y_2, x_3, \dots, x_{n-1+g}) \qquad \text{or} \\
& = (y_1,x_2,x_3,\dots,x_{n-1+g})
\end{align*}

In the first case, no square region contains a $\w$ basepoint, so the relative gradings satisfy $A(\x) = A(\y)$ and $M(\x) = M(\y) + 1$ and there is no ambiguity in $\mathcal{W}(x-y)$.  A Whitney disk $\phi \in \pi_2^0(\x,\y)$ is unique and if it is embedded then it is empty.  The $U$ coefficients always vanish.

In the second case, there can be at most two disks $\phi \in \pi_2^0(\x,\y)$ connecting $\x$ to $\y$.  Again, if $\x,\y$ have the appropriate relative gradings then an embedded disk is empty.  The $U$ coefficients can be computed by taking the inner product $\langle \phi, W \rangle$ which takes $O(n)$ time.

\begin{lemma}
\label{lemma:differential_complexity}
The differential can be computed in $O(v \left(\frac{v}{n}\right)^{2n})$.
\end{lemma}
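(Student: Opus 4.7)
The plan is to combine the generator-count bound from Lemma~\ref{lemma:generator_bound} with the per-pair cost analysis implicit in the discussion immediately preceding the lemma. By Lemma~\ref{lemma:generator_bound}, the chain complex has at most $\left(\frac{v}{n}\right)^n$ generators, so there are at most $\left(\frac{v}{n}\right)^{2n}$ ordered pairs $(\x,\y)$ to consider. The strategy is to show that for each such pair we can detect and record its contribution to $\partial^-$ in $O(v)$ time, which yields the stated bound.

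First, I would argue that we only need to seriously examine those pairs $(\x,\y)$ which differ in at most two coordinates; for all other pairs, the Sarkar--Wang theorem guarantees no contribution, and we can reject them in $O(n)$ time by scanning their coordinate tuples. Second, for each surviving pair I would compute the candidate 2-chain $\phi = \cW(x-y)$ using the precomputed pseudoinverse $\cW$ (whose construction was already charged to the runtime in Lemma~\ref{lemma:gradings_complexity}). Since $x-y$ has at most four nonzero entries in $V$, evaluating $\cW$ on it amounts to summing at most four columns of $\cW$, each of length $v$, which takes $O(v)$ time; the correction by $\pm\tfrac{1}{2}P_{i,j}$ in the two-coordinate case likewise adds $O(v)$ since each $P_{i,j}$ is supported on the $A_k$ and $B_k$ pieces and hence on at most all of the regions.

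Third, I would verify that the resulting chain is an embedded empty bigon or rectangle by checking that its region coefficients lie in $\{0,1\}$ and that no other coordinate of $\x$ or $\y$ sits in the interior of its support; this scan takes $O(v)$ per candidate. For any embedded empty domain $\phi$ that passes this test, the correct relative Maslov and Alexander shifts are already known from the Sarkar--Wang framework, so the only remaining ingredient is the $U$-monomial, which is $U_{w_1}^{n_{w_1}(\phi)}\cdots U_{w_n}^{n_{w_n}(\phi)} = \prod_i U_{w_i}^{\langle \phi, W_i\rangle}$ and can be extracted from $\phi$ in $O(n)$ time since $W$ has $n$ nonzero entries. Multiplying $O(v)$ work per pair by $\left(\frac{v}{n}\right)^{2n}$ pairs gives the claimed bound.

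The main obstacle is the two-coordinate case, where $\pi_2^0(\x,\y)$ can contain two candidate domains differing by the periodic domain $P_{i,j}$. One must verify that extracting and testing both candidates still fits inside an $O(v)$-per-pair budget; this is fine because $P_{i,j}$ is a fixed chain that can be precomputed once in $O(v)$ time, and toggling it affects only the region coefficients in $O(v)$ arithmetic operations, leaving the embedded/empty check and the $U$-coefficient computation unchanged in their asymptotic cost.
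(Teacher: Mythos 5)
Your proposal is correct and follows essentially the same route as the paper: bound the number of generator pairs by $\left(\frac{v}{n}\right)^{2n}$ via Lemma~\ref{lemma:generator_bound}, restrict to pairs differing in at most two coordinates as guaranteed by Sarkar--Wang, and charge $O(v)$ per pair for extracting the candidate domain(s) from the precomputed $\cW$ (plus the $\pm\frac{1}{2}P_{i,j}$ correction) and checking that the result is an empty embedded bigon or square, with the $U$-exponents read off in $O(n)$. The paper's proof is just a one-line version of this, relying on the same preceding discussion; your write-up merely makes the per-pair $O(v)$ accounting explicit.
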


\begin{proof}
It takes $O(v)$ to check whether a domain between a given pair of generators is embedded.
\end{proof}

\textbf{Homology}.  Having computed the relative gradings for all generators as well as the differential $\del^-$, we obtain the chain complex $\CFKm$ and can compute the homology.

Tabulating the combined time required to perform each of the above steps, we have the following proposition:

\begin{proposition}
\label{prop:homology_complexity}
The homology $\HFKm$ can be computed from a diagram $\SH_w^{nice}$ of complexity $(n,v)$ in $O( \left(\frac{v}{n}\right)^{3n})$ time.
\end{proposition}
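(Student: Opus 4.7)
The plan is to assemble the stated time bound by summing the contributions of each step handled in the preceding lemmas, then adding the cost of running linear algebra on the resulting chain complex to extract the homology, and finally extracting the dominant term.

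First, I would invoke Lemma~\ref{lemma:generator_bound} to bound the number of generators of $\CFKm(\SH_w^{nice})$ by $N := (v/n)^n$. Next, I would quote Lemma~\ref{lemma:gradings_complexity} for the cost of computing all Alexander and Maslov gradings of the generators, and Lemma~\ref{lemma:differential_complexity} for the cost of assembling the differential $\partial^-$ as a matrix indexed by pairs of generators.

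The new content is the final step: computing the homology of the assembled complex. Once the differential is in hand, the homology of a size-$N$ complex over $\F$ (or the graded homology over $\F[U_1,\dots,U_\ell]$, done Alexander-grading by Alexander-grading, or by Smith normal form tracking the $U$-action directly) can be obtained by standard Gaussian elimination in $O(N^3)$ time. Since $N \leq (v/n)^n$, this contributes $O((v/n)^{3n})$. Summing with the grading and differential costs gives a total bound
\[
O\bigl(v^3 + v^2 n + n^2 + (v^2+n)(v/n)^n + v(v/n)^{2n} + (v/n)^{3n}\bigr),
\]
from which I would extract the dominant term $(v/n)^{3n}$.

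The main obstacle is nothing substantive — all the hard combinatorial and topological work is in the earlier lemmas. The one mildly delicate point is checking that the polynomial-in-$v$ terms and the $(v/n)^{2n}$ contribution are absorbed into $O((v/n)^{3n})$; since for any nontrivial braid diagram $v$ is at least a small constant multiple of $n$, one has $v/n \geq 1$, and this asymptotic comparison is routine. I would therefore present the proof as a two-sentence accounting: one sentence tabulating the four subcomputations, one sentence observing that the cubic-in-$N$ homology step dominates.
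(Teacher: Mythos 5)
Your proposal is correct and follows essentially the same route as the paper: the paper's proof likewise computes the homology from $\partial^-$ by Gauss--Jordan elimination in $O\left(\left(\frac{v}{n}\right)^{3n}\right)$ time and combines this with Lemmata~\ref{lemma:gradings_complexity} and \ref{lemma:differential_complexity}. The only cosmetic difference is that the paper absorbs the lower-order terms by explicitly invoking the inequality $n < v \leq \left(\frac{v}{n}\right)^{n}$, which is the precise form of the ``routine asymptotic comparison'' you gesture at with the remark that $v$ is at least a constant multiple of $n$.
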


\begin{proof}
Given the differential $\del^-$, the homology $\HFKm$ can be computed via Gauss-Jordan elimination in $O(\left(\frac{v}{n}\right)^{3n})$.  The proposition now follows by combining this fact with Lemmata \ref{lemma:gradings_complexity} and \ref{lemma:differential_complexity} and the fact that $n < v \leq \left(\frac{v}{n}\right)^n$
\end{proof}

Theorem~\ref{thm:alg_time} now follows by combining Propositions \ref{prop:diagram_complexity} and \ref{prop:homology_complexity}.


\section{Example} 
\label{sec:example}

As an example, we compare the computational complexity of this braid algorithm to the GRID algorithm for torus knots.  The actual homology is known and is fairly straightforward to obtain.  However, this class provides a good comparison to understand the algorithmic strengths of this braid approach.

Let $T(p,q)$ denote a torus knot, with $q > 0$.  Then it follows from work of Etnyre and Honda \cite{EH} that the arc index, and equivalently the minimal grid size, of $T(p,q)$ is $|p|+q$.  Using grid diagrams will therefore take $O(((p+q)!)^3)$ to compute the knot Floer homology.

The knot can be represented as the closure of the $q$-strand braid
\[B_{p.q} = (\sigma_1 \sigma_2 \cdots \sigma_{q-1})^p \]
which has length $(q-1)^p$ in the Artin letters.

The upper bound in Lemma \ref{lemma:complexity_runtime} significantly overestimates the time required to obtain the Heegaard diagram.

From now on assume that $p > 0$; the case of negative $p$ follows similarly.  Since each twist is positive, the number of $\alpha,\beta$ intersections and the number of generators of the chain complex grow monotonically.  Note that if $p = kq$ for some integer $k$, then the braid is just $k$ full twists.  Hence, we can bound the complexity of the diagram and time complexity of the algorithm by studying full twists.

A full twist is equivalent to the following.  Separate the $q$ strands into two collections: the first $i$ strands and the final $q-i$ strands.  Perform a full twist on the first $i$ strands then perform a full twist on the final $q- i$ strands.  Finally, perform a full twist exchanging the two collections of strands.

Since the Heegaard diagram depends only on the braid, this makes it easy to see what happens to $\beta_i$.

Twists among the first $i$ strands and final $q-i-1$ strands do not affect $\beta_i$, but the full twist of the two groups of strands does. Each half twist draws $\beta_i$ across each $\alpha$ curve exactly twice and so a full twist introduces 4 new intersections between $\beta_i$ and each $\alpha$ curve.  This is true for all $i$ and so each full twist adds $4(q-i)^2$ new vertices to the diagram.

If $p$ is not a multiple of $q$, then there will be some hexagonal regions that will need to be stabilized away, adding at most $q-2$ new pairs of $\alpha,\beta$ curves and at most sextupling the number of vertices.

Therefore, it follows that the complexity of the Heegaard diagram $\SH_{T(p,q)}^{nice}$ is at most $(2q-3, 24(\frac{p}{q}+1) (q-1)^2)$ and it can be obtained in $O(pq+q^2)$ time.

We can also bound the number of generators fairly easily by using the fact that the permanent is multilinear and that each quantity $|\alpha_i \cap \beta_j|$ grows linearly in $\frac{p}{q}$.  Thus the number of generators of the knot Floer complexes obtained from $\SH_{T(p,q)}^{nice}$ is bounded by 
\[\left(4 \frac{p}{q}\right)^{q-1} \left(8 \frac{p}{q}\right)^{q-2} (2q-3)! \leq c^q p^{2q-3}\]
for some constant $c$ not depending on $p$ or $q$.

Combining these facts with Lemmata \ref{lemma:gradings_complexity} and \ref{lemma:differential_complexity} and applying Gauss-Jordan elimination shows that $\HFKm(T(p,q))$ can be computed in $O((cp^2)^{3q} q^4)$ time.

Thus, for fixed $q$, the computational complexity grows polynomially in $p$, with degree determined by $q$.  For $p \gg q$ the braid approach becomes significantly faster.  However, for $p \sim q$, the grid diagram approach is faster as $(2q)! \leq q^{2q}$.


\bibliographystyle{myalpha}
\nocite{*}
\bibliography{References}

\end{document}